\providecommand{\U}[1]{\protect\rule{.1in}{.1in}}
\newtheorem{theorem}{Theorem}
\newtheorem{remark}{Remark}
\newtheorem{lemma}{Lemma}
\newtheorem{property}{Property}
\begin{document}

\title{A Cyclic Coordinate Descent Algorithm for $l_q$  Regularization
\thanks{This work was partially supported by the National 973 Programs (Grant No. 2013CB329404),
the Key Program of National Natural Science Foundation of China (Grants No.
11131006), the National Natural Science Foundations of China (Grants No. 11001227, 11171272),
NSF Grants NSF DMS-1349855 and DMS-1317602.}
}
\author{Jinshan Zeng,~Zhimin Peng,~Shaobo Lin, Zongben Xu
%, Shaobo Lin
%, Wotao Yin and~Zongben Xu
\thanks{ J.S. Zeng is with the Institute for Information and System
Sciences, School of Mathematics and Statistics, Xi'an Jiaotong University,
Xi'an 710049, and Beijing Center for Mathematics and Information Interdisciplinary Sciences (BCMIIS), Beijing, 100048, China.
Z.M. Peng is with the Department of Mathematics, University of California, Los Angeles (UCLA), Los Angeles, CA 90095, United States.
S.B. Lin and Zongben Xu are with
the Institute for Information and System Sciences, School of Mathematics and Statistics, Xi'an Jiaotong University, Xi'an 710049, P R China.
% is with the department of mathematics, University of California, Los Angeles (UCLA), Los Angeles, CA 90095, United States.
%S.B. Lin is with the Institute for Information and System
%Sciences, School of Mathematics and Statistics, Xi'an Jiaotong University,
%Xi'an 710049, P R China.
(email: jsh.zeng@gmail.com, zhimin.peng@math.ucla.edu, sblin1983@gmail.com, zbxu@mail.xjtu.edu.cn).
%, zbxu@mail.xjtu.edu.cn
%, sblin1983@gmail.com).
%wotaoyin@math.ucla.edu

%sblin1983@gmail.com, wotaoyin@math.ucla.edu, zbxu@mail.xjtu.edu.cn

$*$ Corresponding author: Shaobo Lin (sblin1983@gmail.com).
}}
\maketitle

\begin{abstract}
%\boldmath
In recent studies on sparse modeling, $l_q$ ($0<q<1$) regularization has received considerable attention due to
its superiorities on sparsity-inducing and bias reduction over the $l_1$ regularization.
In this paper, we propose a cyclic coordinate descent (CCD) algorithm for $l_q$ regularization.
Our main result states that the CCD algorithm converges globally to a stationary point as long as the stepsize is less than a positive constant. Furthermore, we demonstrate that the CCD algorithm converges to a local minimizer under certain additional conditions.
Our numerical experiments demonstrate the efficiency of the CCD algorithm.
\newline

\end{abstract}

%paper title
%can use linebreaks \\ within to get better formatting as desired

%author names and IEEE memberships
%note positions of commas and nonbreaking spaces ( ~ ) LaTeX will not break
%a structure at a ~ so this keeps an author's name from being broken across
%two lines.
%use \thanks{} to gain access to the first footnote area
%a separate \thanks must be used for each paragraph as LaTeX2e's \thanks
%was not built to handle multiple paragraphs

%note the % following the last \IEEEmembership and also \thanks -
%these prevent an unwanted space from occurring between the last author name
%and the end of the author line. i.e., if you had this:
%\author{....lastname \thanks{...} \thanks{...} }
%^------------^------------^----Do not want these spaces!
%a space would be appended to the last name and could cause every name on that
%line to be shifted left slightly. This is one of those "LaTeX things". For
%instance, "\textbf{A} \textbf{B}" will typeset as "A B" not "AB". To get
%"AB" then you have to do: "\textbf{A}\textbf{B}"
%\thanks is no different in this regard, so shield the last } of each \thanks
%that ends a line with a % and do not let a space in before the next \thanks.
%Spaces after \IEEEmembership other than the last one are OK (and needed) as
%you are supposed to have spaces between the names. For what it is worth,
%this is a minor point as most people would not even notice if the said evil
%space somehow managed to creep in.

%The paper headers
\markboth{ ~}{Shell \MakeLowercase{\textit{et al.}}: Bare Demo of IEEEtran.cls for Journals}
%The only time the second header will appear is for the odd numbered pages
%after the title page when using the twoside option.
%*** Note that you probably will NOT want to include the author's ***
%*** name in the headers of peer review papers.                   ***
%You can use \ifCLASSOPTIONpeerreview for conditional compilation here if
%you desire.

%If you want to put a publisher's ID mark on the page you can do it like
%this:
%\IEEEpubid{0000--0000/00\$00.00~\copyright~2007 IEEE}
%Remember, if you use this you must call \IEEEpubidadjcol in the second
%column for its text to clear the IEEEpubid mark.

%use for special paper notices
%\IEEEspecialpapernotice{(Invited Paper)}

%make the title area

%IEEEtran.cls defaults to using nonbold math in the Abstract.
%This preserves the distinction between vectors and scalars. However,
%if the journal you are submitting to favors bold math in the abstract,
%then you can use LaTeX's standard command \boldmath at the very startF
%of the abstract to achieve this. Many IEEE journals frown on math
%in the abstract anyway.

%Note that keywords are not normally used for peerreview papers.
\begin{IEEEkeywords}
$l_q$ regularization ($0<q<1$), cyclic coordinate descent,  non-convex optimization, proximity operator, Kurdyka-{\L}ojasiewicz inequality
\end{IEEEkeywords}

%For peer review papers, you can put extra information on the cover
%page as needed:
%\ifCLASSOPTIONpeerreview
%\begin{center} \bfseries EDICS Category: 3-BBND \end{center}
%\fi
%For peerreview papers, this IEEEtran command inserts a page break and
%creates the second title. It will be ignored for other modes.
\IEEEpeerreviewmaketitle

\section{Introduction}

%The very first letter is a 2 line initial drop letter followed
%by the rest of the first word in caps.
%form to use if the first word consists of a single letter:
%\IEEEPARstart{A}{demo} file is ....
%form to use if you need the single drop letter followed by
%normal text (unknown if ever used by IEEE):
%\IEEEPARstart{A}{}demo file is ....
%Some journals put the first two words in caps:
%\IEEEPARstart{T}{his demo} file is ....
%Here we have the typical use of a "T" for an initial drop letter
%and "HIS" in caps to complete the first word.
%\IEEEPARstart{T}{his} demo file is intended to serve as a ``starter file''
%for IEEE journal papers produced under \LaTeX\ using
%IEEEtran.cls version 1.7 and later.
%% You must have at least 2 lines in the paragraph with the drop letter
%% (should never be an issue)
%I wish you the best of success.

%\hfill mds
%\hfill January 11, 2007

Recently, the sparse vector recovery problems have attracted lots of attention in both scientific research and engineering practice (\cite{Donoho06}-\cite{SARZeng2012}).
Typical applications include compressed sensing \cite{Donoho06}, {\cite{Candes06}},
statistical regression {\cite{TibshiraniVS}},
visual coding {\cite{OlshausenVC}}, signal processing {\cite{Combettes2005ProxInSP}},
machine learning {\cite{1-SVMZhu2003}},
magnetic resonance imaging (MRI) {\cite{Lustig2008SparseMRI}}
and microwave imaging {\cite{SARZeng2012}}, {\cite{Zeng2013AccSAR}}.
In a general setup, an unknown sparse vector $x\in\mathbf{R}^{N}$ is
reconstructed from measurements
\begin{equation}
y=Ax, \label{ObsModel}%
\end{equation}
or more generally, from
\begin{equation}
y=Ax+\epsilon, \label{ObsMode2}%
\end{equation}
where $y\in\mathbf{R}^{m}$, $A\in\mathbf{R}^{m\times N}$ (commonly, $m<N$) is
a measurement matrix and $\epsilon$ represents the noise. The problem can be
modeled as the $l_{0}$ regularization problem
\begin{equation}
\min_{x\in\mathbf{R}^{N}}\left\{\frac{1}{2}{\Vert Ax-y\Vert}_{2}^{2}+\lambda{\Vert x\Vert}%
_{0}\right\},
\label{L0Reg}%
\end{equation}
where $\Vert x\Vert_{0}$, formally called the $l_{0}$ norm, denotes the
number of nonzero components of $x$, and $\lambda>0$ is a regularization parameter.
However, due to its NP-hardness \cite{Natarjan95}, $l_{0}$ regularization is generally intractable.

%These problems can be mathematically modeled as the following $l_0$-norm regularized optimization problems
%\begin{equation}
%\min_{x\in \mathbf{R}^N} \left\{ F(x) + \lambda \|x\|_0 \right\},
%\label{L0Reg}
%\end{equation}
%where $F: \mathbf{R}^N \rightarrow [0,\infty)$ is a proper lower-semicontinuous function,
%$\|x\|_0$, formally called the $l_0$ quasi-norm, denotes the number of nonzero components of $x$ and $\lambda>0$ is a regularization parameter.
%However, the optimization problem (\ref{L0Reg}) is generally intractable to solve, due to the discontinuity and non-convexity of the $l_0$ quasi-norm.

In order to overcome such difficulty,
many continuous penalties were proposed to substitute the $l_0$ norm by the following optimization problem
\begin{equation}
\min_{x\in \mathbf{R}^N} \left\{\frac{1}{2}{\Vert Ax-y\Vert}_{2}^{2} + \lambda \Phi(x)\right\},
\label{NonSCS}
\end{equation}
where $\Phi(x)$ is a separable, continuous penalty with $\Phi(x) = \sum_{i=1}^N \phi(x_i)$, and $x=(x_1,\cdots,x_N)^T$.
One of the most important cases is the $l_1$ norm, i.e., $\Phi(x) = \|x\|_1=\sum_{i=1}^N |x_i|$.
The $l_1$ norm is convex and thus, the corresponding $l_1$ norm regularized convex optimization problem can be efficiently solved.
Because of this, the $l_1$ norm gets its popularity and has been accepted as a very useful method for the modeling of the sparsity problems.
Nevertheless, the $l_1$ norm may not induce further sparsity when applied to certain applications {\cite{Candes2008RL1}}, {\cite{Chartrand2007}}, {\cite{Chartrand2008}}.
Alternatively, many non-convex penalties were introduced as relaxations of the $l_0$ norm.
Among these, the $l_q$ norm with $0<q<1$ (not an actual norm when $0<q<1$), i.e., $\|x\|_q = (\sum_{i=1}^N |x_i|^q)^{1/q}$ is one of the most typical subsitutions.
Compared with the $l_1$ norm, the $l_q$ norm can usually induce better sparsity and reduce the bias while the corresponding non-convex regularized optimization problems are generally more difficult to solve.

Several classes of algorithms have been developed to solve the non-convex regularized optimization problem (\ref{NonSCS}). These algorithms include half-quadratic (HQ) algorithm {\cite{GRHQ1992}}, {\cite{GYHQ1995}}, iteratively reweighted algorithm {\cite{Candes2008RL1}}, {\cite{FOCUSS}},
difference of convex functions algorithm (DC programming) {\cite{DCProgGasso2009}},
iterative thresholding algorithm {\cite{BrediesNonconvex}}, {\cite{L1/2TNN}}, {\cite{ZengIJT2014}},
and cyclic coordinate descent (CCD) algorithm {\cite{CDTseng2001}}, {\cite{lqCD-Marjanovic-2014}}.

The first class is the half-quadratic (HQ) algorithm {\cite{GRHQ1992}}, {\cite{GYHQ1995}}.
The basic idea of HQ algorithm is to formulate the original objective function as an infimum of a family of augmented functions via introducing a dual variable, and then minimize the augmented function along the primal and dual variables in an alternate fashion. However, HQ algorithms can be efficient only when both subproblems are easily solved (particularly, when both subproblems have the closed-form solutions).
The second class is the iteratively reweighted algorithm which includes the iteratively reweighted least squares minimization (IRLS) {\cite{FOCUSS}}, {\cite{IRAChartrand_Yin2008}}, {\cite{DaubechiesIRLS}},
and iteratively reweighted $l_1$-minimization (IRL1) {\cite{Candes2008RL1}}.
More specifically, the IRLS algorithm solves a sequence of weighted least squares problems, which can be viewed as some approximate problems to the original optimization problem.
Similarly, the IRL1 algorithm solves a sequence of non-smooth weighted $l_1$-minimization problems,
and hence can be seen as the non-smooth counterpart to the IRLS algorithm.
Nevertheless, the iteratively reweighted algorithms can be only efficient when applied to such non-convex regularization problems whose non-convex penalty can be well approximated by the quadratic function or the weighted $l_1$ norm function.

The third class is the difference of convex functions algorithm (DC programming) {\cite{DCProgGasso2009}}, which is also called
Multi-Stage (MS) convex relaxation {\cite{ZhangMS2010}}.
The key idea of DC programming is to consider a proper decomposition of the objective function.
More specifically, it converts the non-convex penalized problem into a convex reweighted $l_1$ minimization problem (called primal problem) and another convex problem (called dual problem), and then iteratively optimizes the primal and dual problems {\cite{DCProgGasso2009}}.
Hence, it can only be applied to a certain family of non-convex penalties that can be decomposed as a difference of convex functions.
The fourth class is the iterative thresholding algorithm {\cite{L1/2TNN}}, {\cite{ZengIJT2014}}, {\cite{Daubechies2004Soft}}, {\cite{Blumensath08}}, which fits the framework of the forward-backward splitting algorithm {\cite{Attouch2013}} and the framework of the generalized gradient projection algorithm {\cite{BrediesNonconvex}}.
Intuitively, the iterative thresholding algorithm can be seen as a procedure of
Landweber iteration projected by a certain thresholding operator.
%Thus, the thresholding operator plays a key role in the iterative thresholding algorithm.
%For some special non-convex penalties such as SCAD, MCP, LSP and $l_q$-norms with $q=1/2, 2/3$,
%the associated thresholding operators can be expressed analytically {\cite{L1/2TNN}}, {\cite{L2/3Cao2013}}, {\cite{YeNonconvex}}.
Compared with the other types of non-convex algorithms, the iterative thresholding algorithm can be easily implemented and has relatively lower computational complexity for large scale problems {\cite{SARZeng2012}}, {\cite{Zeng2013AccSAR}}, {\cite{Qian20011}}.
However, the iterative thresholding algorithm can only be effectively applied to models with some particular structures.

The last class is the cyclic coordinate descent (CCD) algorithm.
Basically, CCD algorithm is a coordinate descent algorithm with the cyclic coordinate updating rule.
In {\cite{l1CDFriedman2007}}, a CCD algorithm was implemented for solving the $l_1$ regularization problem.
Its convergence can be shown by referring to {\cite{CDTseng2001}}.
In {\cite{ncxCDMazumder2007}}, a CCD algorithm was proposed for a class of non-convex penalized least squares problems.
However, both {\cite{ncxCDMazumder2007}} and {\cite{CDTseng2001}} do not consider the CCD algorithm for $l_q$ regularization problem.
Recently, Marjanovic and Solo {\cite{lqCD-Marjanovic-2014}} proposed a cyclic descent algorithm (called $l_q$CD) for the normalized $l_q$ regularization problem where the columns of $A$ are normalized with the unit $l_2$ norm, i.e., $\|A_i\|_2 = 1$, $i=1,2,\dots, N$, where $A_i$ is the $i$-th column of $A$.
They proved the subsequential convergence and furthered the convergence to a local minimizer under the scalable restricted isometry property (SRIP)
%were justified
in {\cite{lqCD-Marjanovic-2014}}.
According to {\cite{lqCD-Marjanovic-2014}}, the column-normalization requirement is crucial for the convergence analysis of $l_q$CD algorithm.
However, such requirement may limit the applicability of the $l_q$CD algorithm, and also
%However, such requirement may
will introduce some additional computational complexity.
%and even may destroy the intrinsic structure of the observed data.

In this paper, we propose a cyclic coordinate descent (CCD) algorithm (called CCD algorithm) for solving $l_q$ regularization problem without the requirement of column-normalization.
Instead, we introduce a stepsize parameter to improve the applicability of the CCD algorithm.
The proposed CCD algorithm can be viewed as a variant of the $l_q$CD algorithm.
In the perspective of algorithmic implementation,
it can be noted that the $l_q$CD algorithm proposed in {\cite{lqCD-Marjanovic-2014}} is actually a special case of the proposed CCD algorithm with the stepsize as 1 and a column-normalized $A$.
More important, we can justify the convergence instead of the subsequential convergence of the proposed CCD algorithm via introducing a stepsize parameter.
We prove that the proposed CCD algorithm can converge to a stationary point
as long as the stepsize less than $\frac{1}{L_{\max}}$ with $L_{\max} = \max_i \|A_i\|_2^2$.
This convergence condition is generally weaker than those of the iterative thresholding algorithms, i.e., the stepsize parameter should be less than $\|A\|_2^{-2}$ {\cite{ZengIJT2014}}, {\cite{ZengHalf2014}}.
Roughly, the proposed CCD algorithm is a Gauss-Seidel iterative method while the corresponding iterative thresholding algorithm is a Jacobi iterative algorithm.
We can also justify that the proposed CCD algorithm converges to a local minimizer under some additional conditions.
In addition, it can be observed numerically that the proposed CCD algorithm has almost the same performance of the $l_q$CD algorithm when $A$ is normalized in column and the stepsize approaches to 1.

The reminder of this paper is organized as follows.
In section II, we first introduce the $l_q$ $(0<q<1)$ regularization problem,
then propose a cyclic coordinate descent algorithm for such a non-convex  regularization problem.
In section III, we prove the convergence of the proposed CCD algorithm.
%In section IV, we further show that the proposed algorithm converges to a local minimizer under certain conditions.
In section IV, we implement a series of simulations to demonstrate the efficiency of the CCD algorithm.
We conclude this paper in section V.
%, and present some proofs in the Appendix.

{\bf Notations:} We denote $\mathbf{N}$ and $\mathbf{R}$ as the natural number set and one-dimensional real space, respectively.
Given an index set $I \subset \{1,2,\cdots, N\}$, $I^c$ represents its complementary set,
i.e., $I^c = \{1,2,\cdots, N\} \setminus I.$
For any matrix $A\in \mathbf{R}^{M\times N}$, $A_i$ denotes as the $i$-th column of $A$,
and $A_I$ represents a submatrix of $A$ with the columns restricted to an index set $I$.
Similarly, for any vector $x\in \mathbf{R}^N$, $x_i$ denotes as the $i$-th component of $x$,
and $x_I$ represents a subvector of $x$ with the coordinate coefficients restricted to $I$.
For any matrix and vector, we denote $\cdot^T$ by the transpose operation.
For any square matrix $A$, $\sigma_i(A)$ and $\sigma_{\min}(A)$ denote as the $i$-th and the minimal eigenvalues of $A$, respectively.

\section{A Cyclic Coordinate Descent Algorithm}

In this section, we first introduce the non-convex $l_q$ regularization ($0<q<1$) problem,
then show some important theoretical results of the $l_q$ regularization problem, which serve as the basis of the following sections.
Finally, we propose a cyclic coordinate descent (CCD) algorithm for solving the $l_q$ regularization problem.

\subsection{$l_q$ Regularization Problem}

Mathematically, the $l_q$ regularization problem is
\begin{equation}
\min_{x\in \mathbf{R}^N} \left\{T_{\lambda}(x) = \frac{1}{2}{\Vert Ax-y\Vert}_{2}^{2} + \lambda \|x\|_q^q \right\},
\label{lqReg}
\end{equation}
where $0<q<1$ and $\lambda>0$.
It can be easily observed that the first least squares term is proper lower semi-continuous while the $l_q$ penalty is continuous and coercive,
and thus the minimum of the $l_q$ regularization problem exists.
However, due to the non-convexity, the $l_q$ regularization problem might have several global minimizers.

For better characterizing the global minimizers of (\ref{lqReg}), we first generalize the proximity operator from convex case to the non-convex $l_q$ norm,
\begin{equation}
Prox_{\lambda\mu,q}(x) = \arg \min_{u\in \mathbf{R}^N} \left\{\frac{\|x-u\|_2^2}{2} + \lambda\mu \|u\|_q^q\right\},
\label{ProxOper}
\end{equation}
where $\mu>0$ is the stepsize parameter.
Since $\|\cdot\|_q^q$ is separable, thus computing $Prox_{\lambda\mu,q}$ is reduced to solve a one-dimensional minimization problem, that is,
\begin{equation}
prox_{\lambda\mu,q}(z) = \arg \min_{v\in \mathbf{R}} \left\{\frac{|z-v|^2}{2} + \lambda\mu |v|^q\right\},
\label{SVProxOper}
\end{equation}
and thus,
\begin{equation}
Prox_{\lambda\mu,q}(x) = (prox_{\lambda\mu,q}(x_1), \cdots, prox_{\lambda\mu,q}(x_N))^T.
\end{equation}
Furthermore, according to {\cite{BrediesNonconvex}}, $prox_{\lambda\mu, q}(\cdot)$ can be expressed as follows: \\
$prox_{\lambda\mu, q} (z)=$
\begin{equation}
\left\{
\begin{array}
[c]{ll}%
(\cdot + \lambda \mu q sgn(\cdot) |\cdot|^{q-1})^{-1}(z), & \mbox{for} \ |z|\geq \tau_{\mu,q}\\
0, & \mbox{for} \ |z|\leq \tau_{\mu,q}
\end{array}
\right.
\label{ProxMapExpLq}
\end{equation}
for any $z \in \mathbf{R}$
with
\begin{equation}
\tau_{\mu,q} = \frac{2-q}{2-2q}(2\lambda\mu(1-q))^{\frac{1}{2-q}},
\label{ThreshValuexLq}
\end{equation}
\begin{equation}
\eta_{\mu,q} = (2\lambda\mu (1-q))^{\frac{1}{2-q}},
\label{ThreshValueyLq}
\end{equation}
and the range domain of $prox_{\lambda\mu,q}$ is $\{0\}\cup [\eta_{\mu,q},\infty)$, $sgn(\cdot)$ represents the sign function henceforth.
When $\ |z|\geq \tau_{\mu,q}$, the relation $prox_{\lambda\mu, q} (z)= (\cdot + \lambda \mu q sgn(\cdot) |\cdot|^{q-1})^{-1}(z)$ means that
$prox_{\lambda\mu, q} (z)$ satisfies the following equation
\[
v + \lambda \mu q\cdot sgn(v) |v|^{q-1} = z.
\]

\begin{remark} \label{Remark_eg_ThresholdingFun}
From (\ref{ProxMapExpLq}), it can be noted that $prox_{\lambda\mu,q}$ is a set-valued operator since it can take two different function values when $|z|=\tau_{\mu,q}.$
Moreover, for some specific $q$ (say, $q=1/2, 2/3$), the operator $prox_{\lambda\mu,q}(\cdot)$ can be expressed analytically, which are shown as follows:
%Some analytical examples of $prox_{\lambda\mu,q}$ are shown as follows:
%\\
%{\bf Example 1}. Hard thresholding function for $l_{0}$ regularization
%\begin{equation}
%prox_{\lambda\mu,0}(z)= \left\{
%\begin{array}{cc}
%z,  & |z|\geq \tau_{\mu,0} \\
%0, & |z|\leq \tau_{\mu,0}
%\end{array}%
%\right..
%\label{HardThreshFun}
%\end{equation}%

(a) $prox_{\lambda\mu,1/2}(\cdot)$ for $l_{1/2}$ regularization (\cite{L1/2TNN}):
\\
$prox_{\lambda\mu,1/2}(z)=$
\begin{equation}
\\
\left\{
\begin{array}{cc}
{\frac{2}{3}}z
\left(1 + \cos\left({\frac{2{\pi}}{3}}-{\frac{2}{3}}\theta_{\tau_{\mu,1/2}}(z)\right)\right),  &
|z|\geq \tau_{\mu,1/2} \\
0, & |z|\leq \tau_{\mu,1/2}%
\end{array}%
\right.,
\label{HalfThreshFun}
\end{equation}%
with $\theta_{\tau_{\mu,1/2}}(z) = \arccos\left({\frac{\sqrt{2}}{2}}{(\frac{\tau_{\mu,1/2}}{|z|})}^{\frac{3}{2}}\right).$

(b) $prox_{\lambda\mu,2/3}(\cdot)$ for $l_{2/3}$ regularization ({\cite{L2/3Cao2013}}):
\\
$prox_{\lambda\mu,2/3}(z)=$
\begin{equation}
\left\{
\begin{array}{cc}
sgn(z) \left( \frac{\phi_{\tau_{\mu,2/3}}(z)+\sqrt{f_{\tau_{\mu,2/3}}(z)}}{2}\right)^3,  &
|z|\geq \tau_{\mu,2/3} \\
0, & |z|\leq \tau_{\mu,2/3}%
\end{array}%
\right., \label{2/3ThreshFun}
\end{equation}%
where
$$f_{\tau_{\mu,2/3}}(z) = \frac{2|z|}{\phi_{\tau_{\mu,2/3}}(z)}-\phi_{\tau_{\mu,2/3}}(z)^2,$$
and
$$\phi_{\tau_{\mu,2/3}}(z) = \frac{2^{13/16}}{4\sqrt{3}}\tau_{\mu,2/3}^{3/16}(\cosh(\frac{\theta_{\tau_{\mu,2/3}}(z)}{3}))^{1/2}$$
with $\theta_{\tau_{\mu,2/3}}(z) = arccosh(\frac{3\sqrt{3}z^2}{2^{7/4}(2\tau_{\mu,2/3})^{9/8}})$.
\end{remark}

\begin{remark} \label{Remark_ApproxThreshFun}
It was demonstrated in {\cite{L1/2TNN}} that $prox_{\lambda\mu,q}(\cdot)$ has analytical expression when $q$ is $\frac{1}{2}$ or $\frac{2}{3}$.
While for other $q\in (0,1)$, we can use an iterative scheme proposed by {\cite{lqCD-Marjanovic-2014}} to compute the operator $prox_{\lambda\mu,q}(\cdot)$, that is, let $\eta_{\mu,q} \leq v^0 \leq |z|$,
\begin{equation*}
v^{k+1} = |z|-\lambda\mu q |v^k|^{q-1}.
\end{equation*}
\end{remark}

With the definition of proximity operator, we can define a new operator $G_{\lambda\mu,q}(\cdot)$ as
\begin{equation}
G_{\lambda\mu,q}(x) = Prox_{\lambda\mu,q}(x-\mu A^T(Ax-y))
\label{G_oper}
\end{equation}
for any $x\in \mathbf{R}^N$.
We denote ${\mathcal F}_q$ as the fixed point set of the operator $G_{\lambda\mu,q}$, i.e.,
\begin{equation}
{\mathcal F}_q = \{x: x = G_{\lambda\mu,q}(x)\}.
\label{FixedPointSet}
\end{equation}

%{\bf Lemma 2 (Proposition 2.3 in {\cite{BrediesNonconvex}}).} Assume that $0<\mu<\|A\|_2^{-2}$, then each global minimizer of $T_{\lambda}$ is a fixed point of $G_{\lambda\mu,q}(\cdot)$.
According to {\cite{BrediesNonconvex}}, each global minimizer of (\ref{lqReg}) is
a fixed point of $G_{\lambda\mu,q}(\cdot)$, which is shown as follows.
\begin{lemma} \label{Lemma_Minimization-FixedPoint}
(Proposition 2.3 in {\cite{BrediesNonconvex}}).
Assume that $0<\mu<\|A\|_2^{-2}$, then each global minimizer of $T_{\lambda}$ is a fixed point of $G_{\lambda\mu,q}(\cdot)$.
\end{lemma}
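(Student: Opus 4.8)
The plan is to show that if $x^*$ is a global minimizer of $T_\lambda$, then $x^*$ satisfies the fixed-point equation $x^* = G_{\lambda\mu,q}(x^*)$, i.e. $x^* = Prox_{\lambda\mu,q}\bigl(x^* - \mu A^T(Ax^* - y)\bigr)$. The natural route is a majorization (surrogate) argument. First I would introduce, for a fixed reference point $z \in \mathbf{R}^N$, the surrogate functional
\begin{equation*}
T_{\lambda,\mu}(x,z) = T_\lambda(x) - \frac{1}{2}\|Ax - Az\|_2^2 + \frac{1}{2\mu}\|x - z\|_2^2 .
\end{equation*}
Using $0 < \mu < \|A\|_2^{-2}$, one checks that $\frac{1}{2\mu}\|x-z\|_2^2 - \frac{1}{2}\|Ax-Az\|_2^2 \ge 0$ for all $x,z$, with equality when $x = z$; hence $T_{\lambda,\mu}(x,z) \ge T_\lambda(x)$ for all $x$ and $T_{\lambda,\mu}(z,z) = T_\lambda(z)$, so $T_{\lambda,\mu}(\cdot,z)$ is a genuine majorizer of $T_\lambda$ that is tight at $z$.

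Next I would expand $T_{\lambda,\mu}(x,z)$ and collect the terms involving $x$. A short computation shows that, up to an additive constant independent of $x$,
\begin{equation*}
T_{\lambda,\mu}(x,z) = \frac{1}{2\mu}\bigl\| x - \bigl(z - \mu A^T(Az - y)\bigr)\bigr\|_2^2 + \lambda\|x\|_q^q + \mathrm{const}(z),
\end{equation*}
so that the minimizer over $x$ of $T_{\lambda,\mu}(\cdot,z)$ is exactly $Prox_{\lambda\mu,q}\bigl(z - \mu A^T(Az-y)\bigr) = G_{\lambda\mu,q}(z)$. This is the place where the explicit form of the proximity operator in (\ref{ProxOper}) enters, and it is purely a completion-of-the-square identity.

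Now I would take $z = x^*$, the global minimizer of $T_\lambda$. For any $x$ we have $T_\lambda(x) \le T_{\lambda,\mu}(x,x^*)$ would go the wrong way; instead the argument runs: pick $\hat x \in G_{\lambda\mu,q}(x^*)$ (any element, since the prox may be set-valued). Then $T_\lambda(\hat x) \le T_{\lambda,\mu}(\hat x,x^*) \le T_{\lambda,\mu}(x^*,x^*) = T_\lambda(x^*)$, where the first inequality is the majorization property and the second is minimality of $\hat x$ for the surrogate. Since $x^*$ is a global minimizer, $T_\lambda(\hat x) = T_\lambda(x^*)$, which forces both inequalities to be equalities; in particular $T_{\lambda,\mu}(\hat x,x^*) = T_{\lambda,\mu}(x^*,x^*)$, and the gap $\frac{1}{2\mu}\|\hat x - x^*\|_2^2 - \frac{1}{2}\|A\hat x - Ax^*\|_2^2$ must vanish. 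Because $\mu < \|A\|_2^{-2}$ makes this quadratic form strictly positive definite (it equals $\frac{1}{2}\langle (\mu^{-1}I - A^TA)(\hat x - x^*), \hat x - x^*\rangle$ and $\mu^{-1}I - A^TA \succ 0$), vanishing forces $\hat x = x^*$. Hence $x^* = \hat x \in G_{\lambda\mu,q}(x^*)$, i.e. $x^*$ is a fixed point.

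The only delicate point, and the main obstacle, is the set-valuedness of $prox_{\lambda\mu,q}$ noted in Remark \ref{Remark_eg_ThresholdingFun}: I must be careful to argue that \emph{some} selection $\hat x$ from $G_{\lambda\mu,q}(x^*)$ attains the surrogate minimum, and then the strict positive-definiteness of $\mu^{-1}I - A^TA$ pins down that selection to be $x^*$ itself — so in fact every minimizing selection coincides with $x^*$, and the inclusion $x^* \in G_{\lambda\mu,q}(x^*)$ holds unambiguously. Everything else is routine: verifying the majorization inequality reduces to positive semidefiniteness of $\mu^{-1}I - A^TA$, and identifying the surrogate minimizer with $G_{\lambda\mu,q}$ is the completion-of-squares identity above.
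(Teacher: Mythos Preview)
Your argument is correct. The paper itself does not give a proof of this lemma; it simply attributes the result to Proposition~2.3 of \cite{BrediesNonconvex}. The surrogate/majorization route you outline is exactly the standard one (and is the one used in \cite{BrediesNonconvex}): the completion-of-squares identity identifies the surrogate minimizer with $G_{\lambda\mu,q}(z)$, the chain $T_\lambda(\hat x)\le T_{\lambda,\mu}(\hat x,x^*)\le T_{\lambda,\mu}(x^*,x^*)=T_\lambda(x^*)$ collapses to equalities by global optimality of $x^*$, and strict positive definiteness of $\mu^{-1}I-A^TA$ (from $0<\mu<\|A\|_2^{-2}$) forces $\hat x=x^*$. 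Your handling of the set-valuedness is also the right observation: any selection $\hat x\in G_{\lambda\mu,q}(x^*)$ is pinned down to $x^*$, so in particular $x^*\in G_{\lambda\mu,q}(x^*)$.
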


By the definition of $Prox_{\lambda\mu,q}$, a type of optimality conditions of $l_q$ regularization has been derived in {\cite{lqCD-Marjanovic-2014}}.

\begin{lemma} \label{Lemma_OptimalCondition}
(Theorem 3 in {\cite{lqCD-Marjanovic-2014}}). Given a point $x^*$, define the support set of $x^*$ as $Supp(x^*) = \{i: x_i^* \neq 0\}$, then $x^* \in {\mathcal F}_q$ if and only if the following three conditions hold.
\begin{enumerate}
\item[(a)]
For $i\in Supp(x^*)$, $|x_i^*| \geq \eta_{\mu,q}$.

\item[(b)]
For $i\in Supp(x^*)$, $A_i^T(Ax^*-y) + \lambda q sgn(x_i^*) |x_i^*|^{q-1} = 0 $.

\item[(c)]
For $i\in Supp(x^*)^c$, $|A_i^T(Ax^*-y)| \leq \tau_{\mu,q}/\mu$.
\end{enumerate}
\end{lemma}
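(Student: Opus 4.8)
The plan is to exploit the separability of $Prox_{\lambda\mu,q}$ to reduce the fixed-point equation $x^* = G_{\lambda\mu,q}(x^*)$ to $N$ one-dimensional conditions, and then read off (a)--(c) directly from the explicit description (\ref{ProxMapExpLq}) of $prox_{\lambda\mu,q}$. Concretely, set $z^* = x^* - \mu A^T(Ax^*-y)$, so that $z_i^* = x_i^* - \mu A_i^T(Ax^*-y)$; by (\ref{G_oper}) and the componentwise form of $Prox_{\lambda\mu,q}$, the statement $x^*\in{\mathcal F}_q$ is equivalent to $x_i^* \in prox_{\lambda\mu,q}(z_i^*)$ for every $i$. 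The proof then splits into the two cases $i\in Supp(x^*)$ and $i\in Supp(x^*)^c$, and each direction of the equivalence is obtained by inspecting which branch of (\ref{ProxMapExpLq}) applies.

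For the ``only if'' direction: if $i\in Supp(x^*)$ then $x_i^*\neq 0$ is a nonzero value of $prox_{\lambda\mu,q}(z_i^*)$, which forces $|x_i^*|\geq\eta_{\mu,q}$ since the nonzero values of $prox_{\lambda\mu,q}$ have modulus at least $\eta_{\mu,q}$ (the range property recorded after (\ref{ThreshValueyLq})), i.e.\ (a); moreover a nonzero value can only occur in the branch $|z_i^*|\geq\tau_{\mu,q}$ of (\ref{ProxMapExpLq}), in which $x_i^*$ solves $v+\lambda\mu q\,sgn(v)|v|^{q-1}=z_i^*$, and substituting $z_i^*=x_i^*-\mu A_i^T(Ax^*-y)$ and cancelling $\mu$ gives (b). If instead $i\in Supp(x^*)^c$ then $0=prox_{\lambda\mu,q}(z_i^*)$, which (since $prox_{\lambda\mu,q}(z)$ has no zero value when $|z|>\tau_{\mu,q}$) forces $|z_i^*|\leq\tau_{\mu,q}$; as $x_i^*=0$ gives $z_i^* = -\mu A_i^T(Ax^*-y)$, this is exactly (c).

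For the ``if'' direction I would reverse these implications, the only non-bookkeeping ingredient being the identity $h(\eta_{\mu,q})=\tau_{\mu,q}$ together with strict monotonicity of $h(t):=t+\lambda\mu q\,t^{q-1}$ on $[\eta_{\mu,q},\infty)$; both follow from a short computation using $\eta_{\mu,q}^{2-q}=2\lambda\mu(1-q)$ from (\ref{ThreshValueyLq}) and the definition (\ref{ThreshValuexLq}) of $\tau_{\mu,q}$ (one finds $h'(\eta_{\mu,q})=(2-q)/2>0$ and that $h'$ is increasing on $[\eta_{\mu,q},\infty)$). Granting this: for $i\in Supp(x^*)^c$, (c) gives $|z_i^*|=\mu|A_i^T(Ax^*-y)|\leq\tau_{\mu,q}$, hence $prox_{\lambda\mu,q}(z_i^*)=0=x_i^*$; for $i\in Supp(x^*)$, (a) gives $|z_i^*|=|x_i^*|+\lambda\mu q|x_i^*|^{q-1}=h(|x_i^*|)\geq h(\eta_{\mu,q})=\tau_{\mu,q}$, placing us in the first branch of (\ref{ProxMapExpLq}), while (b) rearranges to $x_i^*+\lambda\mu q\,sgn(x_i^*)|x_i^*|^{q-1}=z_i^*$, so by the strict monotonicity of $v\mapsto v+\lambda\mu q\,sgn(v)|v|^{q-1}$ on each half-line (which makes that branch single-valued) we obtain $prox_{\lambda\mu,q}(z_i^*)=x_i^*$. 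Combining the two cases yields $x^*=G_{\lambda\mu,q}(x^*)$.

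The main obstacle I anticipate is not the algebra but the careful matching of the regimes in (\ref{ProxMapExpLq}): one must ensure that conditions (a)--(b) actually land a point in the region $|z_i^*|\geq\tau_{\mu,q}$ where the nonzero branch lives (this is exactly where $h(\eta_{\mu,q})=\tau_{\mu,q}$ is needed), and one must handle the set-valued behaviour of $prox_{\lambda\mu,q}$ at $|z_i^*|=\tau_{\mu,q}$ --- where the value may be either $0$ or a point of modulus $\eta_{\mu,q}$ --- so that neither direction of the equivalence breaks at the threshold. Once these relations among $\eta_{\mu,q}$, $\tau_{\mu,q}$ and $h$ are pinned down, the remainder is a routine componentwise case check.
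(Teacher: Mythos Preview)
Your argument is correct and is the natural route: reduce the fixed-point condition to coordinatewise membership $x_i^*\in prox_{\lambda\mu,q}(z_i^*)$ and then read off (a)--(c) from the explicit two-branch description (\ref{ProxMapExpLq}), using the threshold relation $h(\eta_{\mu,q})=\tau_{\mu,q}$ and the monotonicity of $h$ on $[\eta_{\mu,q},\infty)$ to close the ``if'' direction. Note, however, that the paper does not actually supply its own proof of this lemma: it is quoted verbatim as Theorem~3 of \cite{lqCD-Marjanovic-2014} and left unproved here, so there is no in-paper argument to compare against. Your proof is exactly the standard one and matches what the cited reference does (modulo the stepsize $\mu$, which only rescales the thresholds).

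One small presentational slip worth cleaning up: in the ``if'' direction for $i\in Supp(x^*)$ you write ``(a) gives $|z_i^*|=|x_i^*|+\lambda\mu q|x_i^*|^{q-1}$'', but that \emph{equality} comes from (b) (which lets you compute $z_i^*=x_i^*+\lambda\mu q\,sgn(x_i^*)|x_i^*|^{q-1}$); condition (a) is only used afterwards, together with the monotonicity of $h$, to obtain the inequality $h(|x_i^*|)\geq h(\eta_{\mu,q})=\tau_{\mu,q}$. The logic is fine, just attribute each step to the right hypothesis. Your handling of the set-valued boundary case $|z_i^*|=\tau_{\mu,q}$ is also correct: in both directions the relevant value ($0$ or the point of modulus $\eta_{\mu,q}$) is among the admissible outputs, so the equivalence survives at the threshold.
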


We call the point a {\bf stationary point} of the $l_q$ regularization problem henceforth if it satisfies the optimality conditions in Lemma {\ref{Lemma_OptimalCondition}}.

\subsection{A CCD Algorithm for $l_q$ Regularization}

In this subsection,
we derive a cyclic coordinate descent algorithm for solving the $l_q$ regularization problem.
%This algorithm is similar to the $l_q$CD algorithm studied in {\cite{lqCD-Marjanovic-2014}}.
More specifically, given the current iterate $x^n$, at the next iteration, the $i$-th coefficient is selected by
\begin{equation}
  i= \left\{
  \begin{array}{cc}
  N   & {\rm if}\  0\equiv {(n+1)}\  {\rm mod} \ N \\
  {(n+1)}\  {\rm mod} \ N, & {\rm otherwise} %
  \end{array}%
  \right.
  \label{updatingindex}
\end{equation}%
and then updated by
\[
x_i^{n+1} \in \arg\min_{v\in \mathbf{R}} \left\{\frac{|z^n_i-v|^2}{2} + \lambda\mu |v|^q\right\} = prox_{\lambda\mu,q}(z^n_i),
\]
where
\begin{equation}
z_i^n = x_i^n - \mu A_i^T(Ax^n-y).
\label{updateFowStep}
\end{equation}
It can be seen from (\ref{ProxMapExpLq}) that $prox_{\lambda\mu,q}$ is a set-valued operator.
Therefore, we select a particular single-valued operator of $prox_{\lambda\mu,q}$ and then update $x_i^{n+1}$ according to the following scheme,
\begin{equation}
x^{n+1}_i= \mathcal{T} (z_i^n, x_i^n),
\label{updatingrul1}
\end{equation}
where
\begin{equation*}
\mathcal{T} (z_i^n, x_i^n)=
  \left\{
  \begin{array}{cc}
  prox_{\lambda\mu,q}(z_i^{n})   & {\rm if} \ |z_i^{n}| \neq \tau_{\mu,q} \\
  sgn(z_i^{n})\eta_{\mu,q} \mathbf{I}(x_i^{n} \neq 0), & {\rm if} \ |z_i^{n}| = \tau_{\mu,q}%
  \end{array}%
  \right.,
%  \label{updatingrul1}
\end{equation*}%
and $\mathbf{I}(x_i^{n} \neq 0)$ denotes the indicator function, that is,
\begin{equation*}
  \mathbf{I}(x_i^{n} \neq 0)= \left\{
  \begin{array}{cc}
  1,   & {\rm if}\  x_i^{n} \neq 0\\
  0, & {\rm otherwise} %
  \end{array}%
  \right..
  %\label{updatingindex}
\end{equation*}%
While the other components of $x^{n+1}$ are being fixed, i.e.,
\begin{equation}
x_j^{n+1} = x_j^{n}, \ {\rm for}\ j\neq i.
\label{updatingrul2}
\end{equation}

In summary, we can formulate the proposed algorithm as follows.

\begin{center}
\text{The Cyclic Coordinate Descent (CCD) Algorithm}\\ \vspace{0.1cm}
\begin{tabular}{l}
  \hline
  %\vspace{0.1cm}
  % after \\: \hline or \cline{col1-col2} \cline{col3-col4} ...
  Initialize with $x^0$. Choose a stepsize $\mu>0$, let $n:=0$. \\ \vspace{0.2cm}
  Step 1. Calculate the index $i$ according to (\ref{updatingindex}); \\ \vspace{0.2cm}
  Step 2. Calculate $z_i^n$ according to (\ref{updateFowStep}); \\ \vspace{0.2cm}
  Step 3. Update $x_i^{n+1}$ via (\ref{updatingrul1}) and $x_j^{n+1} = x_j^{n}$  for $j\neq i$;\\ \vspace{0.2cm}
  Step 4. Check the terminational rule. If yes, stop; \\ \vspace{0.2cm}
  \ \ \ \ \ \ \ \ \ otherwise, let $n:= n+1$, go to Step 1.\\
%  Step 5.  \\
  \hline
\end{tabular}\\
\end{center}

%{\bf Remark 2.}
\begin{remark}\label{Remark_Comp_lqCD}
It can be observed that the proposed algorithm is similar to the $l_q$CD proposed by Marjanovic and Solo {\cite{lqCD-Marjanovic-2014}}.
However, we get rid of the column-normalization requirement of $A$ by introducing a stepsize parameter $\mu$. The following sections show that it can bring more benefits in both algorithmic implementation and theoretical justification.
%It improves the flexibility of the algorithm
%The proposed algorithm is similar to the $l_q$ cyclic descent algorithm ($l_q$CD) proposed by Marjanovic and Solo {\cite{lqCD-Marjanovic-2014}}.
%It can be observed that the $l_q$CD algorithm is a special case of the proposed CCD algorithm in this paper with the step-size as 1
%and the columns of $A$ being normalized.
%However, this generalization is non-trivial.
%In the next sections, we will show that it will bring many benefits in the theoretical justification as well as the numerical performance via introducing such a step-size parameter.
\end{remark}

\section{Convergence Analysis}

In this section, we prove the convergence of the proposed CCD algorithm for the $l_q$ regularization problem with $0<q<1$.
We first give some basic properties of the proposed algorithm, which serve as the basis of the next subsections,
and then prove that the CCD algorithm converges to a stationary point from any initial point as long as the stepsize parameter $\mu$ is less than a positive constant,
and finally show that the proposed algorithm converges to a local minimizer under certain additional conditions.

\subsection{Some Basic Properties of CCD Algorithm}

According to the definition of the operator $prox_{\lambda\mu,q}(\cdot)$ (\ref{ProxMapExpLq}) and the updating rule of CCD algorithm (\ref{updatingindex})-(\ref{updatingrul2}),
we can claim that $x_i^{n+1}$ satisfies the following property.

\begin{property}\label{Prop_OptimalCondition_n+1}
Given the current iterate $x^n$ ($n \in \mathbf{N}$), the index set $i$ is determined via (\ref{updatingindex}), then $x_i^{n+1}$ satisfies either
\begin{enumerate}
\item[(a)]
$x_i^{n+1} = 0,$
or,
\item[(b)]
$|x_i^{n+1}| \geq \eta_{\mu,q}$ and also satisfies the following equation
\begin{align}
&A_i^T(Ax^{n+1}-y) + \lambda q sgn(x_i^{n+1}) |x_i^{n+1}|^{q-1} \nonumber\\
&= (\frac{1}{\mu}-A_i^TA_i)(x_i^n - x_i^{n+1}).
\label{OptCon_i}
\end{align}
that is, $\nabla_i T_{\lambda}(x^{n+1}) = (\frac{1}{\mu}-A_i^TA_i)(x_i^n - x_i^{n+1}),$
where $\nabla_i T_{\lambda}(x^{n+1})$ represents the gradient of $T_{\lambda}$ with respect to the $i$-th coordinate at the point $x^{n+1}$.
\end{enumerate}
\end{property}

\begin{proof}
According to (\ref{ProxMapExpLq}) and (\ref{updatingrul1}), it holds obviously either $x_i^{n+1} =0$ or $|x_i^{n+1}| \geq \eta_{\mu,q}$.
Moreover, when $|x_i^{n+1}| \geq \eta_{\mu,q}$, according to (\ref{SVProxOper}),
$x_i^{n+1}$ is a minimizer of the optimization problem (\ref{SVProxOper}) with $z=z_i^n$.
Therefore, $x_i^{n+1}$ should satisfy the following optimality condition
\begin{equation}
x_i^{n+1} - z_i^n  + \lambda \mu q sgn(x_i^{n+1}) |x_i^{n+1}|^{q-1} = 0.
\label{OptCon1}
\end{equation}
Plugging (\ref{updateFowStep}) into (\ref{OptCon1}) gives
\begin{align}
&A_i^T(Ax^{n+1}-y) + \lambda q sgn(x_i^{n+1}) |x_i^{n+1}|^{q-1} \nonumber\\
&= \frac{1}{\mu}(x_i^n - x_i^{n+1})-A_i^TA(x^n-x^{n+1}).
\label{OptCon2}
\end{align}
Combining (\ref{updatingrul2}) and (\ref{OptCon2}) implies (\ref{OptCon_i}).
\end{proof}

As shown by Property 1, the coordinate-wise gradient of $T_{\lambda}$ with respect to the $i$-th coordinate at $x^{n+1}$
is not exact zero but with a relative error.
In the following, we show that the sequence $\{T_{\lambda}(x^n)\}$ satisfies the sufficient decrease property {\cite{Burke1985SuffCon}}.

\begin{property}\label{Prop_SufficientDecrease}
Let $\{x^n\}$ be a sequence generated by the CCD algorithm. Assume that $0<\mu<L_{\max}^{-1}$, then
\[
T_{\lambda}(x^n) - T_{\lambda}(x^{n+1}) \geq \frac{1}{2}(\frac{1}{\mu} - L_{\max}) \|x^n - x^{n+1}\|_2^2, ~\forall n \in \mathbf{N}.
\]
\end{property}

\begin{proof}
Given the current iteration $x^n$, let the coefficient index $i$ be determined according to (\ref{updatingindex}).
According to (\ref{SVProxOper}) and (\ref{updatingrul1}),
\begin{equation*}
x_i^{n+1} \in \arg \min_{v\in \mathbf{R}} \left\{\frac{|z_i^n-v|^2}{2} + \lambda\mu |v|^q\right\},
\end{equation*}
where $z_i^n = x_i^n -\mu A_i^T (Ax^n-y)$.
Then it implies
\begin{align*}
& \frac{1}{2}|\mu A_i^T(Ax^n-y)|^2 + \lambda\mu |x_i^n|^q \nonumber\\
& \geq \frac{1}{2} |(x_i^{n+1} - x_i^n) + \mu A_i^T (Ax^n -y)|^2 + \lambda \mu |x_i^{n+1}|^q.
\end{align*}
Some simplifications give
\begin{align}
&\lambda |x_i^n|^q - \lambda |x_i^{n+1}|^q  \nonumber\\
&\geq \frac{|x_i^{n+1} - x_i^n|^2}{2\mu} + A_i^T(Ax^n-y)(x_i^{n+1} - x_i^n).
\label{Property2.1}
\end{align}
Moreover, since $x_j^{n+1} = x_j^n$ for any $j\neq i$,
%by (\ref{updatingrul2}),
(\ref{Property2.1}) becomes
\begin{align}
&\lambda \|x^n\|_q^q - \lambda \|x^{n+1}\|_q^q  \nonumber\\
&\geq \frac{\|x^{n+1} - x^n\|^2}{2\mu} + \langle Ax^n-y, A(x^{n+1} - x^n) \rangle.
\label{Property2.2}
\end{align}
Adding $\frac{1}{2} \|Ax^n -y\|_2^2 - \frac{1}{2} \|Ax^{n+1} -y\|_2^2$ to both sides of (\ref{Property2.2}) gives
\begin{align}
& T_{\lambda}(x^n) - T_{\lambda}(x^{n+1}) \nonumber\\
& \geq \frac{\|x^{n+1} - x^n\|^2}{2\mu} - \frac{1}{2}\|A(x^n - x^{n+1})\|_2^2 \nonumber\\
& = \frac{\|x^{n+1} - x^n\|^2}{2\mu} - \frac{1}{2}(A_i^TA_i)\|x^n - x^{n+1}\|_2^2 \nonumber\\
& \geq \frac{1}{2}(\frac{1}{\mu} - L_{\max}) \|x^n - x^{n+1}\|_2^2,
\label{Property2.3}
\end{align}
where the first equality holds for
$$
\|A(x^n - x^{n+1})\|_2^2 = (A_i^TA_i)|x_i^n - x_i^{n+1}|^2 =(A_i^TA_i)\|x^n - x^{n+1}\|_2^2,
$$
and the second inequality holds for $A_i^TA_i \leq L_{\max}$.
\end{proof}

In fact, by the first inequality of (\ref{Property2.3}), a slightly stricter but more commonly used condition to guarantee the sufficient decrease is $0<\mu<\|A\|_2^{-2}.$ Property {\ref{Prop_SufficientDecrease}}, gives the boundedness of the sequence $\{x^n\}$.

\begin{property}\label{Prop_Boundedness}
Let $\{x^n\}$ be a sequence generated by the CCD algorithm.
Assume that $0<\mu<L_{\max}^{-1}$ and $T_{\lambda}(x^0)<+\infty$, then $x^n$ is bounded for any $n\in \mathbf{N}$.
\end{property}

\begin{proof}
By Property {\ref{Prop_SufficientDecrease}}, for any $n$,
\begin{align*}
\|x^n\|_q^q \leq \frac{1}{\lambda} T_{\lambda}(x^n) \leq \frac{1}{\lambda} T_{\lambda}(x^0) < +\infty.
\end{align*}
It implies that $x^n$ is bounded.
\end{proof}

Moreover, Property {\ref{Prop_SufficientDecrease}} also gives the following asymptotically regular property.

\begin{property}\label{Prop_AsymptoticRegular}
Let $\{x^n\}$ be a sequence generated by the CCD algorithm.
Assume $0<\mu<L_{\max}^{-1}$, then
$$
\sum_{k=0}^n \|x^{k+1} - x^{k}\|_2^2 \leq \frac{2\mu}{1-\mu L_{\max}} T_{\lambda}(x^0),
$$
and
$$
\|x^n - x^{n+1}\|_2 \rightarrow 0, \ {\rm as}\ n\rightarrow +\infty.
$$
\end{property}

From Properties {\ref{Prop_SufficientDecrease}}-{\ref{Prop_AsymptoticRegular}}, we can prove the subsequential convergence of the CCD algorithm.

\begin{theorem}\label{Thm_SubsequentialConv}
Let $\{x^n\}$ be a sequence generated by the CCD algorithm.
Assume that $0<\mu<L_{\max}^{-1}$ and $T_{\lambda}(x^0) <+\infty$,
then the sequence $\{x^n\}$ has a convergent subsequence.
Moreover, let $\mathcal{L}$ be the set of the limit points of $\{x^n\}$,
then $\mathcal{L}$ is closed and connected.
\end{theorem}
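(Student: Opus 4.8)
The plan is to establish the three assertions in turn: (i) existence of a convergent subsequence, (ii) closedness of the limit-point set $\mathcal{L}$, and (iii) connectedness of $\mathcal{L}$. The first assertion is the easiest: by Property~\ref{Prop_Boundedness}, the sequence $\{x^n\}$ is bounded (using $0<\mu<L_{\max}^{-1}$ and $T_{\lambda}(x^0)<+\infty$), so by the Bolzano--Weierstrass theorem it has a convergent subsequence, and $\mathcal{L}\neq\emptyset$.

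For closedness of $\mathcal{L}$, I would argue directly. Let $\{p_j\}\subset\mathcal{L}$ with $p_j\to p$; I need $p\in\mathcal{L}$. For each $j$, since $p_j$ is a limit point of $\{x^n\}$, one can pick an index $n_j$ (strictly increasing in $j$) with $\|x^{n_j}-p_j\|_2<1/j$; then $\|x^{n_j}-p\|_2\le\|x^{n_j}-p_j\|_2+\|p_j-p\|_2\to 0$, so $p$ is a limit point of $\{x^n\}$, i.e.\ $p\in\mathcal{L}$. Alternatively, one can invoke the standard fact that the set of cluster points of any sequence is always closed (it equals $\bigcap_{n}\overline{\{x^k:k\ge n\}}$, an intersection of closed sets).

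Connectedness is the main obstacle and the part that genuinely uses the dynamics of the algorithm. The key ingredient is the asymptotic regularity $\|x^{n+1}-x^n\|_2\to 0$ from Property~\ref{Prop_AsymptoticRegular}, together with boundedness. The standard argument (in the spirit of Ostrowski's theorem): suppose for contradiction that $\mathcal{L}$ is disconnected, so $\mathcal{L}=\mathcal{L}_1\cup\mathcal{L}_2$ with $\mathcal{L}_1,\mathcal{L}_2$ nonempty, closed (by part (ii) they are closed subsets of the bounded set $\mathcal{L}$, hence compact) and disjoint, so $d:=\mathrm{dist}(\mathcal{L}_1,\mathcal{L}_2)>0$. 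Consider the open neighborhoods $U_k=\{x:\mathrm{dist}(x,\mathcal{L}_k)<d/3\}$ for $k=1,2$; these are disjoint, and every tail of the sequence eventually stays within $U_1\cup U_2$ (otherwise there would be a cluster point outside $U_1\cup U_2$, hence outside $\mathcal{L}$, a contradiction using boundedness). Since both $\mathcal{L}_1$ and $\mathcal{L}_2$ contain cluster points, the sequence enters $U_1$ infinitely often and $U_2$ infinitely often, so it must "jump" between them infinitely often; each such crossing requires a step with $\|x^{n+1}-x^n\|_2\ge d/3$, contradicting $\|x^{n+1}-x^n\|_2\to 0$. Hence $\mathcal{L}$ is connected.

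I expect the connectedness argument to be the delicate step, mainly in phrasing the "finitely many escapes" claim cleanly: one must verify that the sequence cannot converge to the gap between $U_1$ and $U_2$ without producing a cluster point there, which again follows from boundedness. Compactness of $\mathcal{L}$ (inherited from boundedness of $\{x^n\}$ plus closedness) is what makes $d>0$, so parts (i)--(iii) are logically chained. No further properties of $T_{\lambda}$ or of the proximity operator beyond Properties~\ref{Prop_SufficientDecrease}--\ref{Prop_AsymptoticRegular} are needed.
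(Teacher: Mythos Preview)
Your proposal is correct and follows essentially the same approach as the paper: boundedness (Property~\ref{Prop_Boundedness}) gives the convergent subsequence, and asymptotic regularity (Property~\ref{Prop_AsymptoticRegular}) together with boundedness yields the closed and connected structure of $\mathcal{L}$. The only difference is that the paper dispatches the closed-and-connected claim by citing Ostrowski's theorem directly, whereas you have written out the standard proof of that result; your argument is exactly the content of the cited theorem.
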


\begin{proof}
By Property {\ref{Prop_SufficientDecrease}}, we know that $\{T_{\lambda}(x^n)\}$ is a decreasing and lower-bounded sequence,
thus, $\{T_{\lambda}(x^n)\}$ is convergent. Denote the convergent value of $\{T_{\lambda}(x^n)\}$ as $T^*$.
Moreover, by Property {\ref{Prop_Boundedness}}, $\{x^n\}$ is bounded, and also by the continuity of $T_{\lambda}(\cdot)$,
there exists a subsequence of $\{x^n\}$, $\{x^{n_j}\}$ converging to some point $x^*$, which satisfies $T_{\lambda}(x^*) = T^*$.

Furthermore, by Property {\ref{Prop_AsymptoticRegular}} and Ostrowski's result (Theorem 26.1, p. 173) {\cite{Ostrowski1973}},
the limit point set $\mathcal{L}$ of the sequence $\{x^n\}$ is closed and connected.
\end{proof}

Theorem 1 only shows the subsequential convergence of the CCD algorithm.
Moreover, we note that $\mathcal{L}$ might not be a set of isolated points.
Due to this, it becomes challenging to justify the global convergence of CCD algorithm.
More specifically, there are still two open questions on the convergence of the CCD algorithm.
\begin{enumerate}
\item[(a)]
When does the algorithm converge globally? So far, for most non-convex algorithms, only subsequential convergence can be claimed.

\item[(b)]
Where does the algorithm converge? Does the algorithm converge to a global minimizer or more practically,
a local minimizer due to the non-convexity of the optimization problem?
\end{enumerate}

\subsection{Convergence To A Stationary Point}

In this subsection, we will focus on answering the first open question proposed in the end of the last subsection.
More specifically, we will show that the whole sequence $\{x^n\}$ generated by the CCD algorithm converges to a stationary point as long as the stepsize parameter $\mu$ satisfies $0<\mu<\frac{1}{L_{\max}}$.
%, and further converges to a stationary point of $l_q$ regularization if $\mu \in (0,\|A\|_2^{-2}).$

%In order to prove the global convergence of CCD algorithm, we first show some proerties of the algorithm mapping $\mathcal{T}(\cdot,\cdot)$.
Given the current iteration $x^n$, we define the descent function as
\begin{equation}
\Delta(x^n, x^{n+1}) = T_{\lambda}(x^n) - T_{\lambda}(x^{n+1}).
\label{DescentFun}
\end{equation}
Note that $x^n$ and $x^{n+1}$ differ only in their $i$-th coefficient which is determined by (\ref{updatingindex}).
%Moreover, from (\ref{updatingrul1}), it is clear that
%\begin{equation}
%x_i^{n+1} = \mathcal{T}(z_i^n, x_i^n).
%\label{}
%\end{equation}
From now on, if not stated, it is assumed $x_i^{n+1}$ is given by (\ref{updatingrul1}) and $i$ is given by (\ref{updatingindex}).
The following lemma gives an important property of the descent function.

\begin{lemma}\label{Lemma_DescentFun}
Let $\{x^n\}$ be a sequence generated by CCD algorithm.
Assume that $0<\mu<L_{\max}^{-1}$, then
\[
\Delta(x^n, x^{n+1}) = 0 \ {\rm if~ and~ only~ if} \ x_i^{n+1} = x_i^n.
\]
\end{lemma}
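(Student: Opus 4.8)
The plan is to prove the two implications separately; the substance is already contained in Property~\ref{Prop_SufficientDecrease}. For the ``if'' direction I would simply note that, by the coordinate-wise updating rule (\ref{updatingrul2}), $x^n$ and $x^{n+1}$ agree in every coordinate $j\neq i$. Hence $x_i^{n+1}=x_i^n$ implies $x^{n+1}=x^n$ as full vectors, and therefore $\Delta(x^n,x^{n+1})=T_\lambda(x^n)-T_\lambda(x^{n+1})=0$ directly from the definition (\ref{DescentFun}).

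For the ``only if'' direction, assume $\Delta(x^n,x^{n+1})=0$. The key tool is the sufficient decrease estimate of Property~\ref{Prop_SufficientDecrease},
\[
\Delta(x^n,x^{n+1})=T_\lambda(x^n)-T_\lambda(x^{n+1})\ \geq\ \tfrac12\!\left(\tfrac1\mu-L_{\max}\right)\|x^n-x^{n+1}\|_2^2 .
\]
Since $0<\mu<L_{\max}^{-1}$, the constant $\tfrac1\mu-L_{\max}$ is strictly positive, so $\Delta(x^n,x^{n+1})=0$ forces $\|x^n-x^{n+1}\|_2=0$, i.e. $x^{n+1}=x^n$ and in particular $x_i^{n+1}=x_i^n$. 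If one prefers, the sharper intermediate bound in (\ref{Property2.3}), with $A_i^TA_i$ in place of $L_{\max}$, yields the same conclusion under the weaker condition $\mu<(A_i^TA_i)^{-1}$.

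There is no genuine obstacle here; the only point requiring a little care is that the coefficient multiplying $\|x^n-x^{n+1}\|_2^2$ be \emph{strictly} positive, which is precisely what the stepsize restriction $\mu<L_{\max}^{-1}$ guarantees. This is also why the tie-breaking subtlety in the set-valued operator $prox_{\lambda\mu,q}$ causes no trouble: at $|z_i^n|=\tau_{\mu,q}$ the two candidates $0$ and $sgn(z_i^n)\eta_{\mu,q}$ minimize the one-dimensional surrogate in (\ref{SVProxOper}) with equal value, yet any actual change $x_i^{n+1}\neq x_i^n$ still produces a strictly positive drop in $T_\lambda$. The lemma will subsequently be used to translate ``$\Delta$ vanishes'' into ``$\|x^n-x^{n+1}\|_2$ vanishes'' within the Kurdyka-{\L}ojasiewicz based global convergence argument.
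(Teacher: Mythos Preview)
Your proof is correct and follows exactly the same approach as the paper: the ``if'' direction is immediate from (\ref{updatingrul2}), and the ``only if'' direction is the strict positivity of the coefficient in Property~\ref{Prop_SufficientDecrease} under $0<\mu<L_{\max}^{-1}$. The paper's proof is essentially a two-line version of what you wrote, without the additional remarks on the sharper bound and the tie-breaking case.
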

\begin{proof}
($\Rightarrow$) It is obvious that if $x_i^{n+1} = x_i^n$, then $x^{n+1} = x^n$, and thus $\Delta(x^n, x^{n+1}) = 0$.

($\Leftarrow$) If $\Delta(x^n, x^{n+1}) = 0$, then Property {\ref{Prop_SufficientDecrease}} implies $x^{n+1} = x^n$ and thus, $x_i^{n+1} = x_i^n$.
\end{proof}

Moreover, similar to Theorem 10 in {\cite{lqCD-Marjanovic-2014}},
we can claim that the mapping $\mathcal{T}(\cdot,\cdot)$ is a closed mapping, shown as follows.

%{\bf Lemma 4 (Theorem 10 in {\cite{lqCD-Marjanovic-2014}}).}
\begin{lemma}\label{Lemma_ClosedMap}
$\mathcal{T}(\cdot,\cdot)$ is a closed mapping, i.e., assume
\begin{enumerate}
\item[(a)]
$x_i^n \rightarrow x_i^*$ as $n \rightarrow \infty;$

\item[(b)]
$x_i^{n+1} \rightarrow x_i^{**}$ as $n\rightarrow \infty$, where $x_i^{n+1} = \mathcal{T}(z_i^n, x_i^n).$
\end{enumerate}
Then $x_i^{**} = \mathcal{T}(z_i^*, x_i^*)$, where $z_i^* = x_i^* - \mu A_i^T(Ax^*-y)$.
\end{lemma}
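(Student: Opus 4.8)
The plan is to show that the operator $\mathcal{T}(\cdot,\cdot)$ defined in (\ref{updatingrul1}) is closed by a case analysis on the limiting value $|z_i^*|$ relative to the threshold $\tau_{\mu,q}$, exploiting the explicit structure of $prox_{\lambda\mu,q}$ in (\ref{ProxMapExpLq}). First I would record the continuity facts that are available for free: since $x^n \to x^*$ componentwise (hypothesis (a) gives the $i$-th component, and the other components are unchanged under a single CCD step so the full vectors converge along the relevant subsequence), the forward-gradient step is continuous, hence $z_i^n = x_i^n - \mu A_i^T(Ax^n - y) \to x_i^* - \mu A_i^T(Ax^*-y) = z_i^*$. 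Also $\tau_{\mu,q}$ and $\eta_{\mu,q}$ are fixed constants depending only on $\lambda,\mu,q$. So the whole question reduces to: given $z_i^n \to z_i^*$ and $x_i^n \to x_i^*$ with $x_i^{n+1} = \mathcal{T}(z_i^n,x_i^n) \to x_i^{**}$, identify $x_i^{**}$.

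Next I would split into the three regimes. \textbf{Case $|z_i^*| > \tau_{\mu,q}$:} eventually $|z_i^n| > \tau_{\mu,q}$, so $x_i^{n+1} = prox_{\lambda\mu,q}(z_i^n)$ is the branch solving $v + \lambda\mu q\,sgn(v)|v|^{q-1} = z_i^n$ on $[\eta_{\mu,q},\infty)$ (or its sign-reflected version). On this region the inverse function $(\cdot + \lambda\mu q\, sgn(\cdot)|\cdot|^{q-1})^{-1}$ is continuous (indeed the forward map is strictly increasing and smooth on $[\eta_{\mu,q},\infty)$, away from $0$), so passing to the limit gives $x_i^{**} = prox_{\lambda\mu,q}(z_i^*) = \mathcal{T}(z_i^*,x_i^*)$, using that $|z_i^*|\neq \tau_{\mu,q}$ here. \textbf{Case $|z_i^*| < \tau_{\mu,q}$:} eventually $|z_i^n| < \tau_{\mu,q}$, so $x_i^{n+1} = 0$ for all large $n$, whence $x_i^{**} = 0 = prox_{\lambda\mu,q}(z_i^*) = \mathcal{T}(z_i^*,x_i^*)$. \textbf{Case $|z_i^*| = \tau_{\mu,q}$:} this is the delicate boundary case and is where the indicator-function bookkeeping in the definition of $\mathcal{T}$ earns its keep. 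Along the subsequence, for each $n$ either $|z_i^n| < \tau_{\mu,q}$ (then $x_i^{n+1}=0$), or $|z_i^n| > \tau_{\mu,q}$ (then $|x_i^{n+1}| \geq \eta_{\mu,q}$ and by continuity of the inverse map $x_i^{n+1} \to sgn(z_i^*)\eta_{\mu,q}$, since $prox_{\lambda\mu,q}(z) \to sgn(z^*)\eta_{\mu,q}$ as $|z|\downarrow \tau_{\mu,q}$), or $|z_i^n| = \tau_{\mu,q}$ (then $x_i^{n+1} = sgn(z_i^n)\eta_{\mu,q}\mathbf{I}(x_i^n\neq 0)$). So the limit $x_i^{**}$ must be either $0$ or $sgn(z_i^*)\eta_{\mu,q}$. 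To pin it down I would argue using hypothesis (a): if $x_i^* \neq 0$, then $x_i^n \neq 0$ for large $n$; moreover $x_i^{n+1}=0$ would force (by Property \ref{Prop_OptimalCondition_n+1}(a) combined with the jump structure) a contradiction with $x_i^n$ staying bounded away from $0$ while $|z_i^n|$ stays near $\tau_{\mu,q}$ — more carefully, I would show $x_i^{n+1}=0$ together with $z_i^n = x_i^n - \mu A_i^T(Ax^n-y)$ and $|z_i^n|\le\tau_{\mu,q}$ is incompatible in the limit with $x_i^*\ne 0$, forcing $x_i^{**}=sgn(z_i^*)\eta_{\mu,q}=\mathcal{T}(z_i^*,x_i^*)$; symmetrically, if $x_i^* = 0$ then one shows $x_i^{n+1}\to 0$, again matching $\mathcal{T}(z_i^*,x_i^*)=0$.

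The main obstacle is precisely this boundary case $|z_i^*|=\tau_{\mu,q}$: one has to rule out the "wrong" accumulation value by using the extra information carried in the second argument $x_i^n \to x_i^*$, since $prox_{\lambda\mu,q}$ alone is genuinely set-valued there and not closed as a set-valued map in the naive sense. The key quantitative input is that $prox_{\lambda\mu,q}$ has a \emph{jump}: its range is $\{0\}\cup[\eta_{\mu,q},\infty)$ with $\eta_{\mu,q}>0$, so a sequence of outputs cannot drift continuously from the "$0$" branch to the "$\geq\eta_{\mu,q}$" branch; combined with the convergence $x_i^n\to x_i^*$ this forces consistency. I would lean on the analysis of Theorem 10 in \cite{lqCD-Marjanovic-2014} for the structural continuity of the inverse map away from the threshold, and otherwise the argument is elementary limiting analysis.
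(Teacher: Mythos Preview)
The paper does not give a detailed proof here; it simply remarks that the argument is essentially that of Theorem~10 in \cite{lqCD-Marjanovic-2014}, the only change being that the discontinuity of $prox_{\lambda\mu,q}$ sits at $\tau_{\mu,q}$ rather than $\tau_{1,q}$. Your case analysis on $|z_i^*|$ relative to $\tau_{\mu,q}$ is therefore exactly the expected route, and your treatment of the two open regimes $|z_i^*|>\tau_{\mu,q}$ and $|z_i^*|<\tau_{\mu,q}$ is correct and complete.

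The boundary case $|z_i^*|=\tau_{\mu,q}$, however, has a real gap in your sketch. You assert that if $x_i^*\neq 0$ then having $x_i^{n+1}=0$ eventually forces a contradiction, but this does not follow from hypotheses (a) and (b) alone. Concretely: take $x_i^n\to x_i^*\neq 0$ while $z_i^n\to z_i^*$ strictly from below, i.e.\ $|z_i^n|<\tau_{\mu,q}$ for all $n$. Then $x_i^{n+1}=\mathcal{T}(z_i^n,x_i^n)=prox_{\lambda\mu,q}(z_i^n)=0$ for every $n$, so $x_i^{**}=0$, yet $\mathcal{T}(z_i^*,x_i^*)=sgn(z_i^*)\eta_{\mu,q}\neq 0$. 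Nothing in the stated hypotheses prevents $z_i^n$ from approaching the threshold one-sidedly, and the second argument $x_i^n$ carries no information about which side $z_i^n$ is on. Your appeal to Property~\ref{Prop_OptimalCondition_n+1}(a) does not help, since that property is perfectly consistent with $x_i^{n+1}=0$.

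In the paper's actual application (the proof of Theorem~\ref{Thm_LimitPoint_StatPoint}) this pathology is harmless: there one also has $\|x^{n+1}-x^n\|_2\to 0$ by Property~\ref{Prop_AsymptoticRegular}, which forces $x_i^{**}=x_i^*$ outright and renders the boundary identification trivial. But as a standalone closedness statement for $\mathcal{T}$, your boundary argument cannot be completed as written; you would need to import that extra asymptotic-regularity information, or else weaken the conclusion at $|z_i^*|=\tau_{\mu,q}$ to membership in the set-valued $prox_{\lambda\mu,q}(z_i^*)$.
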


The proof is the essentially the same as that of Theorem 10 in {\cite{lqCD-Marjanovic-2014}}. The only difference is that  $prox_{\lambda\mu,q}$ is discontinuous at $\tau_{\mu,q}$ while $prox_{\lambda,q}$ is discontinuous at $\tau_{1,q}$.
Therefore, the closedness of the operator $\mathcal{T}(\cdot,\cdot)$ can not be changed after introducing a stepsize $\mu$.
The following theorem shows that any limit point of the sequence $\{x^n\}$ is a stationary point of the non-convex $l_q$ regularization problem.

\begin{theorem}\label{Thm_LimitPoint_StatPoint}
Let $\{x^n\}$ be a sequence generated by the CCD algorithm, and $\mathcal{L}$ be its limit point set.
Assume that $0<\mu<L_{\max}^{-1}$ and $T_{\lambda}(x^0)<+\infty$, then $\mathcal{L} \subseteq {\mathcal{F}}_q$.
\end{theorem}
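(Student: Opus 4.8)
The plan is to fix an arbitrary limit point $x^* \in \mathcal{L}$, pick a subsequence $x^{n_j} \to x^*$, and show $x^* \in {\mathcal F}_q$; by Lemma~\ref{Lemma_OptimalCondition} this is precisely the statement that $x^*$ is a stationary point, and since $x^*$ is an arbitrary element of $\mathcal{L}$ it gives $\mathcal{L} \subseteq {\mathcal F}_q$. The first step is to upgrade the convergence of $\{x^{n_j}\}$ to the convergence of every fixed shift. By the asymptotic regularity in Property~\ref{Prop_AsymptoticRegular}, for each fixed $t \in \mathbf{N}$ one has $\|x^{n_j+t} - x^{n_j}\|_2 \leq \sum_{s=0}^{t-1}\|x^{n_j+s+1} - x^{n_j+s}\|_2 \to 0$ as $j \to \infty$, hence $x^{n_j+t} \to x^*$ for every fixed $t$.

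Next I would handle the cyclic update index. Since there are only $N$ residues modulo $N$, after passing to a further subsequence I may assume $n_j \equiv r \pmod N$ for some fixed $r$; then, by (\ref{updatingindex}), the coordinate updated in the transition $x^{n_j+t} \to x^{n_j+t+1}$ depends only on $t$, and as $t$ ranges over $\{0,1,\dots,N-1\}$ this coordinate ranges over each index in $\{1,\dots,N\}$ exactly once. Fix a coordinate $\ell$ and let $t_\ell \in \{0,\dots,N-1\}$ be the associated offset, and set $m_j := n_j + t_\ell$. Then $x^{m_j}_\ell \to x^*_\ell$, $x^{m_j+1}_\ell = \mathcal{T}(z^{m_j}_\ell, x^{m_j}_\ell)$, and $x^{m_j+1}_\ell \to x^*_\ell$ (the last because $t_\ell+1 \leq N$ is a fixed shift). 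Since $x^{m_j} \to x^*$, continuity gives $z^{m_j}_\ell = x^{m_j}_\ell - \mu A_\ell^T(Ax^{m_j}-y) \to z^*_\ell := x^*_\ell - \mu A_\ell^T(Ax^*-y)$, so the hypotheses of Lemma~\ref{Lemma_ClosedMap} hold along $\{m_j\}$, and the closedness of $\mathcal{T}(\cdot,\cdot)$ yields $x^*_\ell = \mathcal{T}(z^*_\ell, x^*_\ell)$.

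To conclude, I would note that, by its definition, $\mathcal{T}(z^*_\ell, x^*_\ell)$ always lies in $prox_{\lambda\mu,q}(z^*_\ell)$: it equals $prox_{\lambda\mu,q}(z^*_\ell)$ when $|z^*_\ell| \neq \tau_{\mu,q}$, and otherwise it equals one of the two values $0$ and $sgn(z^*_\ell)\eta_{\mu,q}$ taken by $prox_{\lambda\mu,q}$ at $\tau_{\mu,q}$ in (\ref{ProxMapExpLq}). Therefore $x^*_\ell \in prox_{\lambda\mu,q}(z^*_\ell)$ for every $\ell$, which by the componentwise definition of $Prox_{\lambda\mu,q}$ means $x^* \in Prox_{\lambda\mu,q}(x^* - \mu A^T(Ax^*-y)) = G_{\lambda\mu,q}(x^*)$, i.e. $x^* \in {\mathcal F}_q$. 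This finishes the argument.

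I expect the main point requiring care — rather than genuine difficulty — to be the bookkeeping that converts the single-coordinate CCD updates into the full proximal fixed-point relation: one must combine the fixed-shift consequence of Property~\ref{Prop_AsymptoticRegular} with the periodicity of the update rule (\ref{updatingindex}) so that the optimality relation for each coordinate is recovered in the limit, and then rely on Lemma~\ref{Lemma_ClosedMap} to pass to the limit through the discontinuity of $prox_{\lambda\mu,q}$ at $\tau_{\mu,q}$. Once this scaffolding is in place, everything follows directly from Properties~\ref{Prop_SufficientDecrease}--\ref{Prop_AsymptoticRegular} and Lemmas~\ref{Lemma_OptimalCondition} and~\ref{Lemma_ClosedMap}.
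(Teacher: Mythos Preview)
Your proposal is correct and follows the same overall strategy as the paper: fix a limit point, align the subsequence with the cyclic update schedule, and for each coordinate pass to the limit through the closed operator $\mathcal{T}$ (Lemma~\ref{Lemma_ClosedMap}) to obtain the fixed-point relation.

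The one genuine difference is how you identify the limit of the shifted subsequence. The paper extracts a further subsequence of $\{x^{n_j+1}\}$ converging to some $x^{**}$, then uses the convergence of $\{T_\lambda(x^n)\}$ together with Lemma~\ref{Lemma_DescentFun} (the descent-function characterization $\Delta(x^n,x^{n+1})=0 \Leftrightarrow x^{n+1}_i=x^n_i$) to force $x^{**}=x^*$. You instead invoke asymptotic regularity (Property~\ref{Prop_AsymptoticRegular}) directly to show that any fixed shift $x^{n_j+t}$ converges to the same $x^*$. Your route is shorter and bypasses Lemma~\ref{Lemma_DescentFun} entirely; the paper's route is slightly more self-contained in that it does not rely on knowing the full sequence $\|x^{n+1}-x^n\|_2\to 0$, only on monotone convergence of the objective values. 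Either argument suffices here.
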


The proof of this theorem is similar to that of Theorem 5 in {\cite{lqCD-Marjanovic-2014}}.
For the completion, we provide the proof as follows.

\begin{proof}
Since the sequence $\{x^n\}$ is bounded, then it has limit points.
Let $x^* \in \mathcal{L}$. We now focus on the $i$-th coefficient of the sequence with $n=n(i)=jN+i-1$, where $i=1,2,\dots, N$ and $j=0,1,\dots.$
However, here, we simply use $n$ by which we mean $n(i)$.
Now there exists a subsequence $\{x^{n_1}, x^{n_2}, \cdots\}$ such that
\begin{equation}
\{x^{n_1}, x^{n_2}, \cdots\} \rightarrow x^* \ {\rm and} \ \{x_i^{n_1}, x_i^{n_2}, \cdots\} \rightarrow x_i^*.
\label{subseq1}
\end{equation}
Moreover, since the sequence $\{x^{n_1+1}, x^{n_2+1}, \cdots\}$ is also bounded, thus, it also has limit points.
Denoting one of these by $x^{**}$, then there exists a subsequence $\{x^{l_1+1}, x^{l_2+1}, \cdots\}$ such that
\begin{equation}
\{x^{l_1+1}, x^{l_2+1}, \cdots\} \rightarrow x^{**} \ {\rm and} \ \{x_i^{l_1+1}, x_i^{l_2+1}, \cdots\} \rightarrow x_i^{**},
\label{subseq2}
\end{equation}
where $\{l_1,l_2,\cdots\} \subset \{n_1, n_2, \cdots\}$.
In this case, it holds
\begin{equation}
\{x^{l_1}, x^{l_2}, \cdots\} \rightarrow x^* \ {\rm and} \ \{x_i^{l_1}, x_i^{l_2}, \cdots\} \rightarrow x_i^*,
\label{subseq3}
\end{equation}
since it is a subsequence of (\ref{subseq1}).
From (\ref{updateFowStep}) and (\ref{subseq3}), we have
\begin{equation*}
z_i^{l_j} \rightarrow z_i^* \ {\rm as}\ j \rightarrow \infty.
\end{equation*}
Thus, by Lemma {\ref{Lemma_ClosedMap}}, it holds
\begin{equation}
x_i^{**} = \mathcal{T}(z_i^*,x_i^*).
\label{Th2.1}
\end{equation}
Moreover, by (\ref{subseq2}), (\ref{subseq3}) and (\ref{updatingrul2}), it holds
\begin{equation}
x_j^* = x_j^{**} \ {\rm for} \ j\neq i.
\label{Th2.2}
\end{equation}

In the following,
by the continuity of $T_{\lambda}(\cdot)$ and thus the continuity of $\Delta(\cdot,\cdot)$ with respect to its arguments,
it holds
\[
\Delta(x^{l_j}, x^{l_j+1}) \rightarrow \Delta(x^*,x^{**}).
\]
Moreover, since the sequence $\{T_{\lambda}(x^n)\}$ is convergent, then
\[
\Delta(x^{l_j}, x^{l_j+1}) = T_{\lambda}(x^{l_j}) - T_{\lambda}(x^{l_j+1}) \rightarrow 0 \ {\rm as}\ j\rightarrow \infty,
\]
which implies
\[
\Delta(x^*,x^{**}) = 0.
\]
Furthermore, by Lemma {\ref{Lemma_DescentFun}}, and (\ref{Th2.1})-(\ref{Th2.2}), it holds
\begin{equation}
x_i^{**} = x_i^*.
\label{Th2.3}
\end{equation}
Combining (\ref{Th2.1}) and (\ref{Th2.3}), we have
\begin{equation}
x_i^* = \mathcal{T}(z_i^*,x_i^*).
\label{Th2.4}
\end{equation}

Since $i$ is arbitrary, we have that (\ref{Th2.4}) holds for all $i \in \{1,\cdots, N\}$.
It implies that $x^*$ is a fixed point of $G_{\lambda\mu,q}(\cdot)$, that is, $x^* \in {\mathcal{F}}_q$.
Similarly, since $x^* \in \mathcal{L}$ is also arbitrary, therefore, $\mathcal{L} \subset {\mathcal{F}}_q$.
Consequently, we complete the proof of this theorem.
\end{proof}

In the following theorem, we demonstrate the finite support convergence of the sequence $\{x^n\}$,
that is, the support of $\{x^n\}$ will converge within a finite number of iterations.
Denote $I^n = Supp(x^n) = \{i: |x_i^n| \neq 0, i=1,2\cdots, N\}$, $I = Supp(x^*)$.

\begin{theorem}\label{Thm_SupportConv}
Let $\{x^n\}$ be a sequence generated by the CCD algorithm.
Assume that $0<\mu<\frac{1}{L_{\max}}$ and $x^*$ is a limit point of $\{x^n\}$,
then there exists a sufficiently large positive integer $n^*>N$ such that when $n > n^*$, it holds
\begin{enumerate}
\item[(a)]
either $x_j^n=0$ or $|x_j^n|\geq \eta_{\mu,q}$ for $j=1,2,\cdots,N;$

\item[(b)]
$I^n=I$;

\item[(c)]
$sgn(x^n) = sgn(x^*)$.
\end{enumerate}
\end{theorem}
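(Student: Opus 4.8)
The plan is to combine the asymptotic regularity from Property~\ref{Prop_AsymptoticRegular} with the discrete, ``gapped'' structure of the range of $prox_{\lambda\mu,q}$ to rule out any component of $x^n$ hovering near the threshold. First I would recall that by Property~\ref{Prop_AsymptoticRegular}, $\|x^{n+1}-x^n\|_2\to 0$; since at each iteration only the one coordinate $i$ determined by (\ref{updatingindex}) changes, this says $|x_i^{n+1}-x_i^n|\to 0$ as $n\to\infty$. Next, by Property~\ref{Prop_OptimalCondition_n+1}, each time a coordinate $j$ is updated the new value lies in $\{0\}\cup[\eta_{\mu,q},\infty)$ in absolute value. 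Thus if I can show that for all sufficiently large $n$ no coordinate ``jumps'' across the gap $(0,\eta_{\mu,q})$ — equivalently, that the update does not switch a coordinate from $0$ to something of size $\geq\eta_{\mu,q}$ nor from size $\geq\eta_{\mu,q}$ down to $0$ — then (a), (b), (c) all follow. Indeed, once each coordinate's ``on/off'' status stabilizes, $I^n$ is eventually constant (call it $\tilde I$); then since $x^*$ is the limit of a subsequence and each coordinate is either always $0$ or always of magnitude $\geq\eta_{\mu,q}$ on the tail, $I=Supp(x^*)=\tilde I$, and the signs likewise stabilize because a continuous sequence that is bounded away from $0$ cannot change sign.

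The heart of the argument, then, is to show such a jump cannot happen infinitely often. Suppose coordinate $j$ switches on at iteration $n+1$, i.e.\ $x_j^n=0$ and $|x_j^{n+1}|\geq\eta_{\mu,q}$. Then $|x_j^{n+1}-x_j^n|=|x_j^{n+1}|\geq\eta_{\mu,q}>0$, which for large $n$ contradicts $\|x^{n+1}-x^n\|_2\to 0$. So switching on happens only finitely often. The subtler direction is switching off: $|x_j^n|\geq\eta_{\mu,q}$ but $x_j^{n+1}=0$. By the threshold characterization (\ref{ProxMapExpLq}), $x_j^{n+1}=0$ forces $|z_j^n|\leq\tau_{\mu,q}$; on the other hand, I would use the optimality relation from Property~\ref{Prop_OptimalCondition_n+1}(b) that held at the \emph{previous} time coordinate $j$ was updated, together with asymptotic regularity, to argue that $|z_j^n|$ is eventually bounded below away from $\tau_{\mu,q}$ whenever $|x_j^n|\geq\eta_{\mu,q}$ — essentially, from $x_j^n-z_j^n+\lambda\mu q\,sgn(x_j^n)|x_j^n|^{q-1}$ being small (it equals $\mu(\tfrac1\mu-A_j^TA_j)(x_j^{n-\cdot}-x_j^n)\to0$ along the relevant indices) and $|x_j^n|\geq\eta_{\mu,q}$, one gets $|z_j^n|\geq \eta_{\mu,q}+\lambda\mu q\,\eta_{\mu,q}^{q-1}-o(1)$, and a direct computation with (\ref{ThreshValuexLq})--(\ref{ThreshValueyLq}) shows $\eta_{\mu,q}+\lambda\mu q\,\eta_{\mu,q}^{q-1}=\tau_{\mu,q}/\mu>\tau_{\mu,q}$ — wait, more carefully, that $\eta_{\mu,q}+\lambda\mu q\,\eta_{\mu,q}^{q-1}>\tau_{\mu,q}$ strictly, since $\tau_{\mu,q}$ is exactly the value of $v+\lambda\mu q v^{q-1}$ at the local minimum $v=\eta_{\mu,q}$ of that map minus... — so for large $n$, $|z_j^n|>\tau_{\mu,q}$ and hence $x_j^{n+1}\neq 0$, a contradiction. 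Hence switching off also happens only finitely often.

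Putting these together, there is $n^*>N$ beyond which no coordinate ever switches on or off; since the updating index cycles through all of $\{1,\dots,N\}$ in every block of $N$ consecutive iterations, by taking $n^*$ large enough every coordinate has been ``frozen'' in one of the two states, giving (a) and $I^n\equiv I^{n^*}$. That this common support equals $I=Supp(x^*)$ follows because $x^*$ is a limit of a subsequence $x^{n_k}$ with $n_k>n^*$: a coordinate that is $\geq\eta_{\mu,q}$ in magnitude along the tail has a nonzero limit, and one that is $0$ along the tail has limit $0$. Finally, on the tail each nonzero coordinate of $x^n$ stays $\geq\eta_{\mu,q}$ in magnitude and varies continuously (indeed $|x_j^{n+1}-x_j^n|\to0$), so it cannot change sign, and its sign agrees with that of the limit $x_j^*$; this gives (c).

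I expect the main obstacle to be the ``switching off'' case: one must control $z_j^n$ not by the current-iterate optimality condition (there is none, since $j$ may not be the updated index at step $n$) but by carrying forward the optimality relation from the last iteration at which $j$ was touched, and then quantitatively separating $\tau_{\mu,q}$ from the lower bound $\eta_{\mu,q}+\lambda\mu q\,\eta_{\mu,q}^{q-1}$ using the explicit formulas (\ref{ThreshValuexLq}) and (\ref{ThreshValueyLq}). The bookkeeping over which iterates in the block-cyclic schedule are being compared, and making ``eventually'' uniform over all $N$ coordinates, is the other place where care is needed.
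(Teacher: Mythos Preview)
Your overall strategy is correct and essentially the same as the paper's: both arguments rest on the gap structure $\{0\}\cup\{|v|\geq\eta_{\mu,q}\}$ of the proximal range combined with the asymptotic regularity $\|x^{n+1}-x^n\|_2\to 0$ from Property~\ref{Prop_AsymptoticRegular}. The paper organizes it slightly differently (first showing $I^{n_j}=I$ along the convergent subsequence and separately $I^{n+1}=I^n$ for all large $n$, then combining), but the content is the same.

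However, your ``switching off'' argument is both unnecessary and, as written, does not close. The quantity you try to bound satisfies
\[
\eta_{\mu,q}+\lambda\mu q\,\eta_{\mu,q}^{\,q-1}=\tau_{\mu,q}
\]
\emph{exactly} (this is precisely the relation defining the threshold pair (\ref{ThreshValuexLq})--(\ref{ThreshValueyLq}); indeed $\eta_{\mu,q}$ is the image of $\tau_{\mu,q}$ under the map $(\cdot+\lambda\mu q|\cdot|^{q-1})^{-1}$), so you cannot get a strict separation $|z_j^n|>\tau_{\mu,q}$ this way. Fortunately none of this is needed: the ``switching off'' case is handled by the \emph{same} one-line argument you used for ``switching on''. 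If $|x_j^n|\geq\eta_{\mu,q}$ and $x_j^{n+1}=0$, then $|x_j^{n+1}-x_j^n|=|x_j^n|\geq\eta_{\mu,q}$, which again contradicts $\|x^{n+1}-x^n\|_2\to 0$ for large $n$. With this symmetric observation your proof goes through cleanly; the $z_j^n$ analysis can be deleted entirely.
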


\begin{proof}
We can note that all the coefficient indices will be updated at least one time when $n>N$.
By Property {\ref{Prop_OptimalCondition_n+1}}, once the index $i$ is updated at $n$-th iteration, then the coefficient $x_i^n$ satisfies:
\[
{\rm either}\  x_i^n=0 \ {\rm or}\  |x_i^n|\geq \eta_{\mu,q}.
\]
Thus, Theorem {\ref{Thm_SupportConv}}(a) holds.

In the following, we prove Theorem {\ref{Thm_SupportConv}}(b) and (c).
By the assumption of Theorem {\ref{Thm_SupportConv}}, there exits a subsequence $\{x^{n_j}\}$ converges to $x^*$, i.e.,
\begin{equation}
x^{n_j} \rightarrow x^*\ \ \text{as}\ \ j\rightarrow \infty.
\end{equation}
Thus, there exists a sufficiently large positive integer $j_0$ such that $\|x^{n_j} - x^*\|_2 < \eta_{\mu,q}$ when $j\geq j_0$.
Moreover, by Property {\ref{Prop_AsymptoticRegular}}, there also exists a sufficiently large positive integer $n^*>N$ such that $\|x^{n} - x^{n+1}\|_2 < \eta_{\mu,q}$ when $n>n^*$.
Without loss of generality, we let $n^* = n_{j_0}$.
In the following, we first prove that $I^n = I$ and $sgn(x^n) = sgn(x^*)$ whenever $n>n^*$.

In order to prove $I^n=I$, we first show that $I^{n_j} = I$ when $j\geq j_0$ and then verify that $I^{n+1} = I^n$ when $n>n^*$.
We now prove by contradiction that $I^{n_j} = I$ whenever $j\geq j_0$.
Assume this is not the case, namely, that $I^{n_j} \neq I$.
Then we easily derive a
contradiction through distinguishing the following two possible cases:

\textit{Case 1:} $I^{n_j}\neq I$ and $I^{n_j}\cap I\subset I^{n_j}.$ In this case,
then there exists an $i_{n_j}$ such that $i_{n_j}\in I^{n_j}\setminus I$.
By Theorem {\ref{Thm_SupportConv}}(a), it then implies
\[
\Vert x^{n_j}-x^{\ast}\Vert_{2}\geq|x_{i_{n_j}}^{n_j}|\geq\min_{i\in I^{n_j}%
}|x_{i}^{n_j}|\geq \eta_{\mu,q},
\]
which contradicts to $\Vert x^{n_j}-x^{\ast}\Vert_{2}<\eta_{\mu,q}.$

\textit{Case 2:} $I^{n_j}\neq I$ and $I^{n_j}\cap I=I^{n_j}.$
In this case, it is
obvious that $I^{n_j}\subset I$.
Thus, there exists an $i^{\ast}$ such that
$i^{\ast}\in I\setminus I^{n_j}$.
By Lemma {\ref{Lemma_OptimalCondition}}(a), we still have
\[
\Vert x^{n_j}-x^{\ast}\Vert_{2}\geq|x_{i^{\ast}}^{\ast}|\geq\min_{i\in
I}|x_{i}^{\ast}|\geq \eta_{\mu,q},
\]
and it contradicts to $\Vert x^{n_j}-x^{\ast}\Vert_{2}<\eta_{\mu,q}$.

Thus we have justified that $I^{n_j}=I$ when $j\geq j_0$.
Similarly, it can be also claimed that $I^{n+1} = I^n$ whenever $n>n^*$.
Therefore, whenever $n>n^*$, it holds $I^n = I$.

As $I^{n}=I$
when $n>n^*$, it suffices to test that $sgn(x_{i}^{(n)})=sgn(x_{i}^{\ast})$ for any $i\in I$.
Similar to the first part of proof, we will first check that $sgn(x_i^{n_j}) = sgn(x_i^*)$,
and then $sgn(x_i^{n+1}) = sgn(x_i^n)$ for any $i\in I$ by contradiction.
We now prove $sgn(x_i^{n_j}) = sgn(x_i^*)$ for any $i\in I$.
Assume this is not the case. Then there exists an
$i^{\ast}\in I$ such that $sgn(x_{i^*}^{n_j})\neq sgn(x_{i^*}^{\ast})$, and
hence,
\[
sgn(x_{i^{\ast}}^{n_j})sgn(x_{i^{\ast}}^{\ast})=-1.
\]
From Lemma {\ref{Lemma_OptimalCondition}}(a) and Theorem {\ref{Thm_SupportConv}}(a), it then implies
\begin{align*}
\Vert x^{n_j}-x^{\ast}\Vert_{2}
& \geq|x_{i^{\ast}}^{n_j}-x_{i^{\ast}}^{\ast}|=|x_{i^{\ast}}^{n_j}|+|x_{i^{\ast}}^{\ast}|\nonumber\\
& \geq\min_{i\in I}\{|x_{i}^{n_j}|+|x_{i}^{\ast}|\}\geq 2\eta_{\mu,q},
\end{align*}
contradicting again to $\Vert x^{n_j}-x^{\ast}\Vert_{2}<\eta_{\mu,q}$. This contradiction shows $sgn(x^{n_j})=sgn(x^{\ast})$.
Similarly, we can also show that $sgn(x^{n+1}) = sgn(x^n)$ whenever $n>n^*$.
Therefore, $sgn(x^n) = sgn(x^*)$ when $n>n^*$.

With this, the proof of Theorem {\ref{Thm_SupportConv}} is completed.
\end{proof}

In order to prove the convergence of the whole sequence, we do some modifications of the original sequence $\{x^n\}$,
and then yield a new sequence $\{u^n\}$ such that both sequences have the same convergence behaviours.
We describe these modifications as follows:
\begin{enumerate}
\item[(a)]
Let $n_0 = j_0N>n^*$ for some positive integer $j_0$.
Then we can define a new sequence $\{\hat{x}^n\}$ with $\hat{x}^n = x^{n_0+n}$ for ${n\in \mathbf{N}}$.
It is obvious that $\{\hat{x}^n\}$ has the same convergence behaviour with $\{x^n\}$.
Moreover, it can be noted from Theorem {\ref{Thm_SupportConv}} that all the support sets and signs of $\{\hat{x}^n\}$ are the same.

\item[(b)] Denote $I$ as the convergent support set of the sequence $\{x^n\}$. Let $K$ be the number of elements of $I$.
Without loss of generality, we assume
\[
1\leq I(1)<I(2)<\cdots < I(K) \leq N.
\]
According to the updating rule (\ref{updatingindex})-(\ref{updatingrul2}) of the CCD algorithm,
we can observe that many successive iterations of $\{\hat{x}^n\}$ are the same.
Thus, we can merge these successive iterations into a single iteration.
Moreover, the updating rule of the index is cyclic and thus periodic.
As a consequence, the merging procedure can be repeated periodically.
Formally, we consider such a periodic subsequence with $N$-length of $\{\hat{x}^n\}$, i.e.,
$$\{\hat{x}^{jN+I(1)},\hat{x}^{jN+I(1)+1}, \cdots, \hat{x}^{jN+I(1)+N-1}\}$$
for $j\in \mathbf{N}$.
Then for any $j\in \mathbf{N}$, we emerge the $N$-length sequence $\{\hat{x}^{jN+I(1)}, \cdots, \hat{x}^{jN+I(1)+N-1}\}$
into a new $K$-length sequence $\{\bar{x}^{jK+1},\bar{x}^{jK+2}, \cdots, \bar{x}^{jK+K}\}$ with the rule
$$
\{\hat{x}^{jN+I(i)},\cdots, \hat{x}^{jN+I(i+1)-1}\} \mapsto \bar{x}^{jK+i},
$$
with $\bar{x}^{jK+i}=\hat{x}^{jN+I(i)}$ for $i=1,2,\cdots,K,$
since $\hat{x}^{jN+I(i)+k} = \hat{x}^{jN+I(i)}$ for $k=1,\cdots,I(i+1)-I(i)-1.$
Moreover, we emerge the first $I(1)$ iterations of $\{\hat{x}^n\}$ into $\bar{x}^0$, i.e.,
$$
\{\hat{x}^{0},\cdots, \hat{x}^{I(1)-1}\} \mapsto \bar{x}^{0},
$$
with $\bar{x}^{0} = \hat{x}^{0}$, since these iterations keep invariant and are equal to $\hat{x}^{0}$.
After this procedure, we obtain a new sequence $\{\bar{x}^{n}\}$ with $n=jK+i$, $i=0,\cdots,K-1$ and $j\in \mathbf{N}$.
It can be observed that such an emerging procedure keeps the convergence behaviour of $\{\bar{x}^n\}$ the same as that of $\{\hat{x}^n\}$
and $\{x^n\}$.

\item[(c)] Furthermore, for the index set $I$, we define a projection $P_I$ as
$$
P_I: \mathbf{R^N} \rightarrow \mathbf{R}^K, P_I x = x_I, \forall x\in \mathbf{R}^N,
$$
where $x_I$ represents the subvector of $x$ restricted to the index set $I$.
With this projection, a new sequence $\{u^n\}$  is constructed such that
$$
u^n = P_I \bar{x}^n,
$$
for $n\in \mathbf{N}$.
As we can observe that $u^n$ keeps all the non-zero elements of $\bar{x}^n$ while gets rid of its zero elements.
Moreover, this operation can not change the convergence behavior of $\{\bar{x}^n\}$ and $\{u^n\}$.
Therefore, the convergence behaviour of $\{u^n\}$ is the same as $\{x^n\}$.
\end{enumerate}

In the following, we will prove the convergence of $\{x^n\}$ via justifying the convergence of $\{u^n\}$.
Let
$$\mathcal{U} = \{u^*: u^* = P_I x^*, \forall x^* \in \mathcal{L}\}.$$
Then $\mathcal{U}$ is the corresponding limit point set of $\{u^n\}$.
Furthermore, we define a new function $T$ as follows:
\begin{equation}
T: \mathbf{R}^K \rightarrow \mathbf{R}, T(u) = T_{\lambda}(P_I^T u), \forall u\in \mathbf{R}^K,
\label{def-T}
\end{equation}
where $P_I^T$ denotes the transpose of the projection $P_I$, and is defined as
\[
P_I^T: \mathbf{R}^K \rightarrow \mathbf{R}^N, (P_I^T u)_I = u,  (P_I^T u)_{I^c} = 0, \forall u \in \mathbf{R}^K,
\]
where $I^c$ represents the complementary set of $I$, i.e., $I^c = \{1,2,\cdots, N\} \setminus I$,
$(P_I^T u)_I$  and $(P_I^T u)_{I^c}$ represent the subvectors of $P_I^T u$ restricted to $I$ and $I^c$, respectively.
Let $B=A_I$, where $A_I$ denotes the submatrix of $A$ restricted to the index set $I$.
Thus,
\[
T(u) = \frac{1}{2} \|Bu-y\|_2^2 + \lambda \|u\|_q^q.
\]

After the modifications (a)-(c), we can observe that the following properties still hold for $\{u^n\}$.

\begin{lemma}\label{Lemma_propofnewsequence}
The sequence $\{u^n\}$ possesses the following properties:
\begin{enumerate}
\item[(a)]
$\{u^n\}$ is updated via the following cyclic coordinate descent rule.
Given the current iteration $u^n$, only the $i$-th coordinate will be updated while the other coordinate coefficients will be fixed at the next iteration, i.e.,
\begin{equation}
u^{n+1}_i= \mathcal{T} (v_i^n, u_i^n),
\label{newupdatingrul1}
\end{equation}
and
\begin{equation}
u_j^{n+1} = u_j^{n}, \ {\rm for}\ j\neq i,
\label{newupdatingrul2}
\end{equation}
where $i$ is determined by
\begin{equation}
  i= \left\{
  \begin{array}{cc}
  K   & {\rm if}\  0\equiv {(n+1)}\  {\rm mod} \ K \\
  {(n+1)}\  {\rm mod} \ K, & {\rm otherwise} %
  \end{array}%
  \right.,
  \label{newupdatingindex}
\end{equation}%
and
\begin{equation}
v_i^n = u_i^n - \mu B_i^T(Bu^n-y),
\label{newupdateFowStep}
\end{equation}

\item[(b)]
According to the updating rule (\ref{newupdatingrul1})-(\ref{newupdateFowStep}), for $n\geq K$,
then there exit two positive integers $1\leq i_0 \leq K$ and $j_0 \geq 1$ such that $n=j_0K+i_0$ and
\begin{equation}
  u_j^n= \left\{
  \begin{array}{cc}
  u_j^{n-(i_0-j)},   & {\rm if}\  1\leq j \leq i_0\\
  u_j^{n-K-(i_0-j)}, & {\rm if}\ i_0+1 \leq j \leq K %
  \end{array}%
  \right..
  \label{cyclic-u}
\end{equation}%

\item[(c)]
For any $n \in \mathbf{N}$,
\[
u^n \in \mathbf{R}_{{\eta_{\mu,q}}^c}^K,
\]
where $\mathbf{R}_{{\eta_{\mu,q}}^c}$ represents a one-dimensional real subspace, which is defined as
\[
\mathbf{R}_{{\eta_{\mu,q}}^c} = \mathbf{R} \setminus (-\eta_{\mu,q}, \eta_{\mu,q}).
\]

\item[(d)] Given $u^n$, and $i$ is determined by (\ref{newupdatingindex}), then $u_i^{n+1}$
satisfies the following equation
\begin{align}
&B_i^T(Bu^{n+1}-y) + \lambda q sgn(u_i^{n+1}) |u_i^{n+1}|^{q-1} \nonumber\\
&= (\frac{1}{\mu}-B_i^TB_i)(u_i^n - u_i^{n+1}).
\label{newOptCon_i}
\end{align}
That is,
$$
\nabla_i T(u^{n+1}) = (\frac{1}{\mu}-B_i^TB_i)(u_i^n - u_i^{n+1}),
$$
where $\nabla_i T(u^{n+1})$ represents the gradient of $T(\cdot)$ with respect to the $i$-th coordinate at the point $u^{n+1}$.

\item[(e)]
$\{u^n\}$ satisfies the following sufficient decrease condition:
%Assume that $0<\mu<L_{\max}^{-1}$, then
\[
T(u^n) - T(u^{n+1}) \geq a \|u^n - u^{n+1}\|_2^2,
\]
for $n\in \mathbf{N}$, where $a=\frac{1}{2}(\frac{1}{\mu} - L_{\max}).$

\item[(f)]
\[
\|u^{n+1} -u^n\|_2 \rightarrow 0, \ {\rm as} \ n\rightarrow \infty.
\]

\end{enumerate}
\end{lemma}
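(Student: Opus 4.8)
The plan is to view Lemma \ref{Lemma_propofnewsequence} as a bookkeeping exercise: the three modifications (a)--(c) preceding the statement were arranged precisely so that each of (a)--(f) is inherited, essentially verbatim, from Properties \ref{Prop_OptimalCondition_n+1}--\ref{Prop_AsymptoticRegular} and Theorem \ref{Thm_SupportConv}. The single identity carrying all the weight is the following: by the choice $n_0=j_0N>n^*$ in modification (a) and by Theorem \ref{Thm_SupportConv}(b), every iterate $\hat{x}^n$, hence every $\bar{x}^n$, is supported exactly on $I$; therefore $A\bar{x}^n = A_I(\bar{x}^n)_I = Bu^n$, and consequently $A_i^T(A\bar{x}^n-y) = B_i^T(Bu^n-y)$ and $A_i^TA_i = B_i^TB_i$ for every $i\in I$. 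Through this identity and the coordinate isometry identifying $\mathbf{R}^K$ with the subspace $\{x:x_{I^c}=0\}$, every statement about the original algorithm on $\mathbf{R}^N$ can be pushed down to the reduced algorithm on $\mathbf{R}^K$ with data matrix $B$.

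For (a) I would first observe that for $n>n^*$ an update of a coordinate $j\in I^c$ is idle: by Theorem \ref{Thm_SupportConv}(b) that coordinate is $0$ both before and after the update, so $\hat{x}^n$ is left unchanged. Hence within one period $\{\hat{x}^{jN+I(i)},\dots,\hat{x}^{jN+I(i+1)-1}\}$ the only potentially non-trivial update touches a coordinate of $I$, which is exactly what the merging rule in modification (b) keeps; since the original index rule (\ref{updatingindex}) is $N$-periodic and the support pattern is also $N$-periodic once past $n^*$, the merged rule is $K$-periodic, yielding (\ref{newupdatingrul1})--(\ref{newupdatingindex}). The form (\ref{newupdateFowStep}) of the forward step then follows from $z_{I(i)}^n = x_{I(i)}^n - \mu A_{I(i)}^T(Ax^n-y) = u_i^n - \mu B_i^T(Bu^n-y)$ together with the fact that $\mathcal{T}$ depends on its two arguments only through the pair (forward point, current coefficient). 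Item (b) is pure unrolling of a $K$-periodic cyclic rule: writing $n=j_0K+i_0$, a coordinate $j\le i_0$ was last updated at step $n-(i_0-j)$ of the current cycle while a coordinate $j\ge i_0+1$ was last updated at step $n-K-(i_0-j)$ of the previous one; a short induction on $n$ gives (\ref{cyclic-u}). Item (c) is immediate from Theorem \ref{Thm_SupportConv}(a)--(b), since $u^n=P_I\bar{x}^n$ retains exactly the nonzero coordinates of $\bar{x}^n$, each of modulus at least $\eta_{\mu,q}$, so $u^n\in\mathbf{R}_{{\eta_{\mu,q}}^c}^K$.

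For (d) I would invoke Property \ref{Prop_OptimalCondition_n+1}(b) at the active step --- which is legitimate because $x^{n+1}_{I(i)}\neq 0$ since the support equals $I$ --- and then substitute $A_i^T(Ax^{n+1}-y)=B_i^T(Bu^{n+1}-y)$, $A_i^TA_i=B_i^TB_i$ and $x^{\bullet}_{I(i)}=u^{\bullet}_i$ to obtain (\ref{newOptCon_i}); the gradient reformulation uses $\nabla_iT(u)=B_i^T(Bu-y)+\lambda q\,sgn(u_i)|u_i|^{q-1}$, which is valid because $u_i\neq 0$. For (e), between $u^n$ and $u^{n+1}$ only the single coordinate dictated by (\ref{newupdatingindex}) moves, the intervening original steps in the merged block being idle and preserving $T_{\lambda}$; hence $\|\bar{x}^n-\bar{x}^{n+1}\|_2=\|u^n-u^{n+1}\|_2$ and $T(u^n)-T(u^{n+1})=T_{\lambda}(\bar{x}^n)-T_{\lambda}(\bar{x}^{n+1})$ equals the decrease of the one genuine CCD step, to which Property \ref{Prop_SufficientDecrease} applies and gives $T(u^n)-T(u^{n+1})\geq\frac{1}{2}(\frac{1}{\mu}-L_{\max})\|u^n-u^{n+1}\|_2^2$. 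Finally (f) follows from (e): $\{T(u^n)\}$ is non-increasing and bounded below, hence convergent, so $T(u^n)-T(u^{n+1})\to 0$ and therefore $\|u^{n+1}-u^n\|_2\to 0$; alternatively one may cite Property \ref{Prop_AsymptoticRegular} directly.

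The only genuinely delicate point is (a): one must verify that the idle coordinate updates really are idle and that the merging is consistent across successive periods. Both facts rest on the support set $I$ and the cyclic index rule being $N$-periodic once the iteration index exceeds $n^*$, which is exactly why modification (a) shifts the sequence by a multiple of $N$ past $n^*$. After that observation, items (b)--(f) are mechanical transcriptions of the previously established properties along the projection $P_I$.
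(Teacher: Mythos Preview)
Your proposal is correct and follows exactly the approach of the paper, which treats Lemma \ref{Lemma_propofnewsequence} as a direct transcription of Properties \ref{Prop_OptimalCondition_n+1}--\ref{Prop_AsymptoticRegular} and Theorem \ref{Thm_SupportConv} through the modification procedure. In fact your write-up is considerably more detailed than the paper's own proof, which merely lists, for each of (a)--(f), the earlier result it descends from; your explicit identification of the key identity $A\bar{x}^n=Bu^n$ and your verification that the $I^c$-updates are idle make the argument more transparent without deviating from the intended route.
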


\begin{proof}
The properties of $\{u^n\}$ listed in Lemma {\ref{Lemma_propofnewsequence}} are some direct extensions of those of $\{x^n\}$.
More specifically, Lemma {\ref{Lemma_propofnewsequence}}(a) can be derived by the CCD algorithm updating rule (\ref{updatingindex})-(\ref{updatingrul2}) and the modification procedure.
Lemma {\ref{Lemma_propofnewsequence}}(b) is obtained directly by the cyclic updating rule.
Lemma {\ref{Lemma_propofnewsequence}}(c) and (d) can be derived by Property {\ref{Prop_OptimalCondition_n+1}}(b) and the updating rule (\ref{newupdatingrul1})-(\ref{newupdateFowStep}).
Lemma {\ref{Lemma_propofnewsequence}}(e) can be obtained by Property {\ref{Prop_SufficientDecrease}} and the definition of $T$ (\ref{def-T}).
Lemma {\ref{Lemma_propofnewsequence}}(f) can be directly derived by Property {\ref{Prop_AsymptoticRegular}}.
\end{proof}

Besides Lemma {\ref{Lemma_propofnewsequence}}, the following lemma shows that the gradient sequence $\{\nabla T(u^n)\}$ satisfies the so-called relative error condition {\cite{Attouch2013}}, which is useful for proving the convergence of $\{u^k\}$.
%and the second one is to demonstrate that the function $T$ satisfies the Kurdyka-{\L}ojasiewicz property {\cite{Attouch2013}}.

\begin{lemma}\label{Lemma_RelativeErrCond}
When $n\geq K-1$, $\nabla T(u^{n+1})$ satisfies
\begin{equation*}
\|\nabla T(u^{n+1})\|_2 \leq b \|u^{n+1} - u^n\|_2,
\end{equation*}
where $b = (\frac{1}{\mu} + K\delta)\sqrt{K},$ with
$$\delta = \max_{i,j= 1,2,\cdots,K} |B_i^TB_{j}|.$$
\end{lemma}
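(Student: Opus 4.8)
The plan is to bound $\|\nabla T(u^{n+1})\|_2$ by controlling each coordinate $\nabla_i T(u^{n+1})$ separately and then summing. The key observation is that Lemma~\ref{Lemma_propofnewsequence}(d) already gives us an exact expression for $\nabla_i T(u^{n+1})$ when $i$ is the coordinate updated at step $n+1$, namely $\nabla_i T(u^{n+1}) = (\frac{1}{\mu} - B_i^T B_i)(u_i^n - u_i^{n+1})$, which is immediately bounded by $\frac{1}{\mu}|u_i^n - u_i^{n+1}| \le \frac{1}{\mu}\|u^{n+1}-u^n\|_2$. The difficulty is with the coordinates $j \ne i$: at step $n+1$ only coordinate $i$ moved, so $\nabla_j T(u^{n+1})$ need not vanish. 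The idea is to relate $\nabla_j T(u^{n+1})$ to $\nabla_j T(u^{m})$ where $m \le n+1$ is the most recent iteration at which coordinate $j$ was itself updated (so $u_j^m = u_j^{n+1}$, and at that iteration its coordinate-wise optimality gave the analogue of (d), hence a bound of the form $\frac{1}{\mu}|u_j^{m-1}-u_j^{m}|$). Because the update rule is cyclic with period $K$, for any $n \ge K-1$ every coordinate has been updated within the last $K$ iterations, so $m \in \{n+2-K, \dots, n+1\}$.

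Concretely, I would write, for $j \ne i$,
\[
\nabla_j T(u^{n+1}) = \nabla_j T(u^{m}) + \bigl(\nabla_j T(u^{n+1}) - \nabla_j T(u^{m})\bigr),
\]
where $m$ is the last update time of coordinate $j$ at or before $n+1$. The first term is $(\frac{1}{\mu} - B_j^TB_j)(u_j^{m-1}-u_j^m)$ by the coordinate optimality at step $m$, bounded by $\frac{1}{\mu}\|u^m - u^{m-1}\|_2$. For the second term, since $\nabla_j T(u) = B_j^T(Bu - y) + \lambda q\, sgn(u_j)|u_j|^{q-1}$ and $u_j^{n+1} = u_j^m$ (coordinate $j$ did not move between $m$ and $n+1$), the nonsmooth part cancels and we get $\nabla_j T(u^{n+1}) - \nabla_j T(u^m) = B_j^T B (u^{n+1} - u^m) = \sum_{\ell} B_j^T B_\ell (u_\ell^{n+1} - u_\ell^m)$; each entry is bounded by $\delta$ and the difference $u^{n+1}-u^m$ telescopes over at most $K-1$ consecutive one-coordinate steps, so $\|B_j^T B(u^{n+1}-u^m)\|$ is at most $\delta$ times the sum of the $\le K$ increments. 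Assembling: each $|\nabla_j T(u^{n+1})| \le \frac{1}{\mu}\,(\text{one increment}) + \delta\,(\text{sum of}\le K\text{ increments})$, and after bounding each increment by $\|u^{n+1}-u^n\|_2$ is too crude — instead I would bound all the increments over the last $K$ steps by a common quantity. Cleanly, using $\sum_{\text{last }K\text{ steps}}\|u^{r+1}-u^r\|_2 \le \sqrt{K}\,\max_r\|u^{r+1}-u^r\|_2$ and then relating the max to $\|u^{n+1}-u^n\|_2$; more robustly, one keeps the factor $K$ explicit to reach the stated constant $b = (\frac{1}{\mu} + K\delta)\sqrt{K}$, where the $\sqrt{K}$ comes from passing from the coordinate-wise $\ell_\infty$ bound to the $\ell_2$ norm of the $K$-vector $\nabla T(u^{n+1})$, and the $\frac{1}{\mu} + K\delta$ is the per-coordinate bound (one $\frac{1}{\mu}$ term from the optimality residual plus $K$ cross terms each of size $\delta$, with the increments absorbed into $\|u^{n+1}-u^n\|_2$ via Lemma~\ref{Lemma_propofnewsequence}(b)).

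The main obstacle I anticipate is the bookkeeping needed to show that each of the $O(K)$ coordinate increments appearing in the telescoping estimate can indeed be dominated by a single factor times $\|u^{n+1}-u^n\|_2$, rather than by a sum over a window of past increments. This is where Lemma~\ref{Lemma_propofnewsequence}(b) is essential: it pins down exactly which past iterate each coordinate of $u^n$ (and $u^{n+1}$) equals, so that the various differences $u_j^{n+1}-u_j^m$ can be rewritten as sums of \emph{actual} consecutive iteration gaps $u^{r+1}-u^r$ within one cycle — and since within a single cycle at most one coordinate changes per step, each such gap equals $\|u^{r+1}-u^r\|_2$, and the largest of these is controlled. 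Getting the constant to come out to precisely $(\frac{1}{\mu}+K\delta)\sqrt{K}$ is then a matter of not being wasteful in these steps; the structure of the argument is otherwise routine once the cyclic indexing from part~(b) is set up carefully.
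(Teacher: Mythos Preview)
Your overall strategy is correct and matches the paper's: for each coordinate $j$, trace back to the iteration $m$ at which $j$ was last updated, use the optimality condition there (Lemma~\ref{Lemma_propofnewsequence}(d)), and control $\nabla_j T(u^{n+1})-\nabla_j T(u^m)$ via the telescoping sum $\sum B_j^TB_\ell(u_\ell^{\cdot}-u_\ell^{\cdot})$. The $\sqrt{K}$ and the $\tfrac{1}{\mu}+K\delta$ arise for the reasons you give.

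There is, however, one step that is fuzzy in your write-up and would not go through as stated. You describe bounding the windowed increments by ``$\sum_{\text{last }K\text{ steps}}\|u^{r+1}-u^r\|_2\le\sqrt{K}\max_r\|u^{r+1}-u^r\|_2$ and then relating the max to $\|u^{n+1}-u^n\|_2$'', and later that ``the largest of these is controlled''. In general the maximum of the last $K$ one-step increments is \emph{not} controlled by $\|u^{n+1}-u^n\|_2$; no monotonicity of $\|u^{r+1}-u^r\|_2$ is available here. What saves you is not a max-bound but an \emph{identity} coming from Lemma~\ref{Lemma_propofnewsequence}(b): if coordinate $\ell$ was updated at step $r\in\{n+2-K,\dots,n+1\}$, then $u_\ell^{r}=u_\ell^{n+1}$ and $u_\ell^{r-1}=u_\ell^{n}$, so the single-step increment at that step equals exactly $u_\ell^{n+1}-u_\ell^{n}$. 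Consequently the telescoped cross term for coordinate $j$ is $\sum_{\ell} B_j^TB_\ell\,(u_\ell^{n+1}-u_\ell^{n})$ (sum over those $\ell$ updated after $j$ in the current cycle), and the optimality residual at time $m$ is $(\tfrac{1}{\mu}-B_j^TB_j)(u_j^{n}-u_j^{n+1})$. This gives the clean per-coordinate bound
\[
|\nabla_j T(u^{n+1})|\le \tfrac{1}{\mu}\,|u_j^{n+1}-u_j^{n}|+\delta\,\|u^{n+1}-u^n\|_1,
\]
with no window of past increments remaining. Summing over $j$ and using $\|v\|_2\le\|v\|_1\le\sqrt{K}\|v\|_2$ yields the stated constant. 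Once you replace the ``max is controlled'' step by this identity, your argument is complete and coincides with the paper's.
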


\begin{proof}
We assume that $n+1 = j^*K+i^*$ for some positive integers $j^* \geq 1$ and $1\leq i^*\leq K$.
For simplicity, let
\begin{equation}
i^*=K.
\label{i-equalto-K}
\end{equation}
If not, we can renumber the indices of the coordinates such that (\ref{i-equalto-K}) holds while the iterative sequence $\{u^n\}$ keeps invariant,
since the updating rule (\ref{newupdatingindex}) is cyclic and thus periodic.
Such an operation can be described as follows:
for each $n\geq K$, by Lemma {\ref{Lemma_propofnewsequence}}(b), we know that the coefficients of $u^n$ are only related to the previous $K-1$ iterates.
Thus, we consider the following a period of the original updating order, i.e.,
\[
\{i^*+1,\cdots,K,1,\cdots,i^*\},
\]
then we can renumber the above coordinate updating order as
\[
\{1',\cdots,(K-i^*)',(K-i^*+1)',\cdots,K'\},
\]
with
\begin{equation*}
  j'= \left\{
  \begin{array}{cc}
  i^*+j,   & {\rm if}\  1\leq j \leq K-i^*\\
  j-(K-i^*), & {\rm if}\ K-i^*< j \leq K %
  \end{array}%
  \right..
  \label{renum-j}
\end{equation*}%

In the following, we will calculate $\nabla_i T(u^{n+1})$ by a recursive way for $i=K,K-1,\cdots,1$. Specifically,
\begin{enumerate}
\item[(a)]
For $i=K$, by Lemma {\ref{Lemma_propofnewsequence}}(d), it holds
\begin{equation}
\nabla_K T(u^{n+1}) = (\frac{1}{\mu} - B_K^TB_K) (u_K^n - u_K^{n+1}).
\label{nablaT-n+1-K}
\end{equation}
For any $i=K-1,K-2,\cdots,1,$
\begin{equation*}
\nabla_i T(u^{n+1}) = B_i^T(Bu^{n+1}-y) + \lambda q sgn(u_i^{n+1})|u_i^{n+1}|^{q-1},
\end{equation*}
and $u_i^{n+1} = u_i^n$.
Therefore, for $i=K-1,K-2,\cdots,1,$
\begin{equation}
\nabla_i T(u^{n+1}) = \nabla_i T(u^n) + B_i^T B_K (u_K^{n+1} - u_K^n).
\label{nablaT-red1}
\end{equation}

\item[(b)]
For $i=K-1$, since $n=j^*K+(K-1)$, then by Lemma {\ref{Lemma_propofnewsequence}}(d) again, it holds
\begin{equation}
\nabla_{K-1} T(u^{n}) = (\frac{1}{\mu} - B_{K-1}^TB_{K-1}) (u_{K-1}^{n-1} - u_{K-1}^{n}).
%\label{nablaT-n-K-1}
\end{equation}
By Lemma {\ref{Lemma_propofnewsequence}}(b), it implies
$$
u_{K-1}^{n-1} = u_{K-1}^{n+1}.
$$
Thus,
\begin{equation}
\nabla_{K-1} T(u^{n}) = (\frac{1}{\mu} - B_{K-1}^TB_{K-1}) (u_{K-1}^{n+1} - u_{K-1}^{n}).
\label{nablaT-n-K-1}
\end{equation}
Combing (\ref{nablaT-red1}) with (\ref{nablaT-n-K-1}),
\begin{align}
&\nabla_{K-1} T(u^{n+1})= \nonumber\\
& (\frac{1}{\mu} - B_{K-1}^TB_{K-1}) (u_{K-1}^{n+1} - u_{K-1}^{n}) \nonumber\\
&+ B_{K-1}^T B_K (u_K^{n+1} - u_K^n)
\label{nablaT-n+1-K-1}
\end{align}
Similarly to (\ref{nablaT-red1}), for $i=K-2,K-3,\cdots,1$, we have
\begin{equation}
\nabla_i T(u^{n}) = \nabla_i T(u^{n-1}) + B_i^T B_{K-2} (u_{K-2}^{n} - u_{K-2}^{n-1}).
\label{nablaT-red2}
\end{equation}

\item[(c)]
For any $i=K-j$ with $0\leq j \leq K-1$, by a recursive way, we have
\begin{align}
&\nabla_{K-j} T(u^{n+1}) \nonumber\\
&= \nabla_{K-j} T(u^n) + B_{K-j}^TB_K(u_K^{n+1} -u_K^n)\nonumber\\
&= \nabla_{K-j} T(u^{n-1}) + B_{K-j}^T \sum_{k=0}^{1} B_{K-k}(u_{K-k}^{n+1-k} -u_{K-k}^{n-k})\nonumber\\
&= \cdots \nonumber\\
&= \nabla_{K-j} T(u^{n-j+1}) \nonumber\\
&+ B_{K-j}^T \sum_{k=0}^{j-1} B_{K-k}(u_{K-k}^{n+1-k} -u_{K-k}^{n-k}).
\label{nablaT-n+1-K-j}
\end{align}
Moreover, Lemma {\ref{Lemma_propofnewsequence}}(d) gives
\begin{align}
&\nabla_{K-j} T(u^{n-j+1}) \nonumber\\
&= (\frac{1}{\mu} - B_{K-j}^T B_{K-j}) (u_{K-j}^{n-j} - u_{K-j}^{n-j+1}).
\label{nablaT-n-j+1-K-j}
\end{align}
Plugging (\ref{nablaT-n-j+1-K-j}) into (\ref{nablaT-n+1-K-j}), it holds
\begin{align}
&\nabla_{K-j} T(u^{n+1}) = \frac{1}{\mu} (u_{K-j}^{n-j} - u_{K-j}^{n-j+1})\nonumber\\
&  + \sum_{k=0}^{j} B_{K-j}^T B_{K-k}(u_{K-k}^{n+1-k} -u_{K-k}^{n-k}),
\label{nablaT-K-j}
\end{align}
for $j=0,1,\cdots,K-1.$
Furthermore, by Lemma {\ref{Lemma_propofnewsequence}}(b), it implies
$$
u_{K-k}^{n+1-k} = u_{K-k}^{n+1}
$$
and
$$
u_{K-k}^{n-k} = u_{K-k}^{n}
$$
for $0\leq k \leq K-1$.
Therefore, (\ref{nablaT-K-j}) becomes
\begin{align}
&\nabla_{K-j} T(u^{n+1}) = \frac{1}{\mu} (u_{K-j}^{n} - u_{K-j}^{n+1}) \nonumber\\
&+ \sum_{k=0}^{j} B_{K-j}^T B_{K-k}(u_{K-k}^{n+1} -u_{K-k}^{n}),
\label{nablaT-K-j*}
\end{align}
for $j=0,1,\cdots,K-1.$
\end{enumerate}

Furthermore, by (\ref{nablaT-K-j*}), it implies
\begin{align}
&|\nabla_{K-j} T(u^{n+1})| \leq \frac{1}{\mu} |u_{K-j}^{n} - u_{K-j}^{n+1}| \nonumber\\
&+ \sum_{k=0}^{j} (|B_{K-j}^T B_{K-k}| \cdot |u_{K-k}^{n+1} -u_{K-k}^{n}|)\nonumber\\
& \leq \frac{1}{\mu} |u_{K-j}^{n} - u_{K-j}^{n+1}| + \delta \|u^{n+1}-u^n\|_1,
\label{nablaT-K-j-abs}
\end{align}
for $j=0,1,\cdots,K-1,$
where the second inequality holds for
$$\delta = \max_{i,j=1,\cdots,K} |B_i^T B_j|$$
and
$$\sum_{k=0}^{j} |u_{K-k}^{n+1} -u_{K-k}^{n}| \leq \|u^{n+1}-u^n\|_1.$$
Summing $|\nabla_{K-j} T(u^{n+1})|$ with respect to $j$ gives
\begin{align}
&\|\nabla T(u^{n+1})\|_1 \nonumber\\
&\leq \frac{1}{\mu} \|u^{n+1}-u^n\|_1 + K\delta \|u^{n+1}-u^n\|_1 \nonumber\\
&\leq (\frac{1}{\mu} + K\delta)\sqrt{K}\|u^{n+1}-u^n\|_2,
\label{nablaT-1-norm}
\end{align}
where the second inequality holds for the norm inequality between 1-norm and 2-norm,
that is,
\begin{align}
\|u\|_2 \leq \|u\|_1 \leq \sqrt{K} \|u\|_2,
\label{NormIneqn}
\end{align}
for any $u\in \mathbf{R}^K$.
Also, combing (\ref{NormIneqn}) and (\ref{nablaT-1-norm}) implies
\begin{equation*}
\|\nabla T(u^{n+1})\|_2 \leq (\frac{1}{\mu} + K\delta)\sqrt{K}\|u^{n+1}-u^n\|_2.
\end{equation*}
\end{proof}

Furthermore, according to {\cite{Attouch2013}} (p. 122), we know that the function
$$T(u) = \frac{1}{2}\|Bu-y\|_2^2 + \lambda \|u\|_q^q$$
is a {\bf Kurdyka-{\L}ojasiewicz (KL)} function with a desingularizing function of the form $\varphi(s) = cs^{\theta},$
where $c>0$, $\theta \in [0,1).$
As a consequence, we can claim the following lemma.
\begin{lemma}\label{Lemma_KL_T}
For any $u^* \in \mathbf{R}_{{\eta_{\mu,q}}^c}^K$ (where $\mathbf{R}_{{\eta_{\mu,q}}^c}^K$ is defined as in Lemma {\ref{Lemma_propofnewsequence}}(c)),
there exist a neighborhood $U$ of $u^*$ and a constant $\xi>0$ such that for all $u\in U\cap \{u: T(u^*)<T(u)<T(u^*) + \xi\}$, it holds
\begin{equation}
\varphi'(T(u)-T(u^*)) \|\nabla T(u)\|_2\geq 1.
\label{KLIneq}
\end{equation}
\end{lemma}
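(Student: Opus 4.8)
The plan is to obtain (\ref{KLIneq}) from the fact, just recalled from \cite{Attouch2013}, that $T$ is a KL function with a power-type desingularizing function, once I have checked that $T$ is genuinely differentiable in the region under consideration so that the abstract KL inequality --- normally written with $\mathrm{dist}(0,\partial T(u))$ --- reduces to the gradient inequality (\ref{KLIneq}). The first step is therefore the following observation: every $u^{*}\in\mathbf{R}_{{\eta_{\mu,q}}^c}^K$ has all of its coordinates bounded away from zero, $|u_i^{*}|\geq\eta_{\mu,q}>0$ for $i=1,\dots,K$, hence $u^{*}$ lies in the open set $O=\{u\in\mathbf{R}^K:u_i\neq 0,\ i=1,\dots,K\}$. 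On $O$ each map $u\mapsto|u_i|^q$ is real-analytic, so $T(u)=\tfrac12\|Bu-y\|_2^2+\lambda\|u\|_q^q$ is real-analytic, in particular $C^1$, on a ball $B(u^{*},r)\subset O$; on that ball $\partial T(u)=\{\nabla T(u)\}$ and $\mathrm{dist}(0,\partial T(u))=\|\nabla T(u)\|_2$.

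Next I would invoke the KL property of $T$ at $u^{*}$: there are a neighborhood $U\subseteq B(u^{*},r)$, a constant $\xi>0$, and a desingularizing function $\varphi(s)=cs^{\theta}$ with $c>0$, $\theta\in[0,1)$, such that $\varphi'(T(u)-T(u^{*}))\,\mathrm{dist}(0,\partial T(u))\geq 1$ for every $u\in U\cap\{u:T(u^{*})<T(u)<T(u^{*})+\xi\}$; substituting $\mathrm{dist}(0,\partial T(u))=\|\nabla T(u)\|_2$ from the previous step yields exactly (\ref{KLIneq}). If one prefers a self-contained argument that also exhibits the exponent explicitly, one applies the classical {\L}ojasiewicz gradient inequality to the real-analytic function $T$ near $u^{*}$: there exist $\theta\in[\tfrac12,1)$, $c_0>0$ and a neighborhood $U$ with $|T(u)-T(u^{*})|^{\theta}\leq c_0\|\nabla T(u)\|_2$ on $U$ (automatic from continuity of $\nabla T$ when $\nabla T(u^{*})\neq 0$, and the genuine {\L}ojasiewicz estimate when $\nabla T(u^{*})=0$); taking $\varphi(s)=\tfrac{c_0}{1-\theta}s^{1-\theta}$, which is of the required form with exponent $1-\theta\in(0,\tfrac12]\subset[0,1)$, one has $\varphi'(s)=c_0 s^{-\theta}$, whence for $s=T(u)-T(u^{*})>0$ one gets $\varphi'(s)\|\nabla T(u)\|_2\geq c_0 s^{-\theta}\cdot c_0^{-1}s^{\theta}=1$.

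The only delicate point is the first step: $\|\cdot\|_q^q$ is not differentiable on the coordinate hyperplanes $\{u_i=0\}$, so both the identity $\mathrm{dist}(0,\partial T(u))=\|\nabla T(u)\|_2$ and the analyticity needed for the {\L}ojasiewicz estimate hold only because the hypothesis $u^{*}\in\mathbf{R}_{{\eta_{\mu,q}}^c}^K$ keeps every coordinate at distance at least $\eta_{\mu,q}>0$ from the origin; everything afterward is a routine specialization of the KL/{\L}ojasiewicz machinery together with a one-line reparametrization of the desingularizing function. (Should a single $\varphi$ valid uniformly over the whole limit-point set $\mathcal{U}$ be needed later in the convergence proof, it follows from this pointwise statement via the uniformized KL property on the compact connected set $\mathcal{U}$.)
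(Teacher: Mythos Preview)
Your proposal is correct and follows essentially the same route as the paper: the paper does not give a standalone proof of this lemma but simply states it ``as a consequence'' of the fact (cited from \cite{Attouch2013}, p.~122) that $T(u)=\tfrac12\|Bu-y\|_2^2+\lambda\|u\|_q^q$ is a KL function with desingularizing function $\varphi(s)=cs^{\theta}$. Your write-up is in fact more careful than the paper's, since you make explicit the role of the hypothesis $u^{*}\in\mathbf{R}_{{\eta_{\mu,q}}^c}^K$ in guaranteeing that $T$ is smooth (indeed real-analytic) near $u^{*}$, so that the abstract KL inequality with $\mathrm{dist}(0,\partial T(u))$ becomes the gradient inequality (\ref{KLIneq}); the paper leaves this reduction implicit.
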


With Lemmas {\ref{Lemma_propofnewsequence}}-{\ref{Lemma_KL_T}}, we can prove the convergence of $\{u^n\}$ as the following theorem.

\begin{theorem}\label{Thm_Conv_un}
The sequence $\{u^n\}$ is convergent.
\end{theorem}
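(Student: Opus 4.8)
The plan is to invoke the standard Kurdyka--{\L}ojasiewicz (KL) descent argument in the spirit of Attouch--Bolte--Svaiter, since all of its hypotheses have already been verified for $\{u^n\}$: the sufficient decrease $T(u^n)-T(u^{n+1})\ge a\|u^n-u^{n+1}\|_2^2$ of Lemma~\ref{Lemma_propofnewsequence}(e), the relative error bound $\|\nabla T(u^{n+1})\|_2\le b\|u^{n+1}-u^n\|_2$ (valid for $n\ge K-1$) of Lemma~\ref{Lemma_RelativeErrCond}, and the KL inequality of Lemma~\ref{Lemma_KL_T} at every point of $\mathbf{R}_{{\eta_{\mu,q}}^c}^K$. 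Since $\{u^n\}$ is bounded it has a limit point $u^*$, and by Lemma~\ref{Lemma_propofnewsequence}(c) together with the closedness of $\mathbf{R}_{{\eta_{\mu,q}}^c}^K$ we have $u^*\in\mathbf{R}_{{\eta_{\mu,q}}^c}^K$, so Lemma~\ref{Lemma_KL_T} is applicable at $u^*$. By Lemma~\ref{Lemma_propofnewsequence}(e) the sequence $\{T(u^n)\}$ is nonincreasing and bounded below, hence convergent to some value $T^*$; by continuity of $T$ and $u^{n_j}\to u^*$ along a subsequence we get $T(u^*)=T^*$ and $T(u^n)\ge T^*$ for all $n$.

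First I would dispose of the trivial case: if $T(u^n)=T^*$ for some finite $n$, then $T(u^m)=T^*$ for all $m\ge n$, and Lemma~\ref{Lemma_propofnewsequence}(e) forces $u^{m+1}=u^m$ for all $m\ge n$, so $\{u^n\}$ is eventually constant and hence convergent. Thus I may assume $T(u^n)>T^*$ for every $n$. The heart of the proof is then the usual telescoping estimate: writing $\Delta_n:=\varphi(T(u^n)-T^*)$ with $\varphi(s)=cs^{\theta}$ as in Lemma~\ref{Lemma_KL_T}, concavity of $\varphi$ gives
\[
\Delta_n-\Delta_{n+1}\ \ge\ \varphi'\big(T(u^n)-T^*\big)\big(T(u^n)-T(u^{n+1})\big),
\]
and combining the KL inequality $\varphi'(T(u^n)-T^*)\,\|\nabla T(u^n)\|_2\ge 1$ with the sufficient decrease in the numerator and the relative error $\|\nabla T(u^n)\|_2\le b\|u^n-u^{n-1}\|_2$ in the denominator yields
\[
\|u^{n+1}-u^n\|_2^2\ \le\ \frac{b}{a}\,\big(\Delta_n-\Delta_{n+1}\big)\,\|u^n-u^{n-1}\|_2 .
\]
Taking square roots and using $\sqrt{\alpha\beta}\le\tfrac12(\alpha+\beta)$ gives $2\|u^{n+1}-u^n\|_2\le \tfrac{b}{a}(\Delta_n-\Delta_{n+1})+\|u^n-u^{n-1}\|_2$; summing this over $n$ and rearranging the resulting telescoping sums produces the uniform bound $\sum_{n\ge n_0}\|u^{n+1}-u^n\|_2\le \|u^{n_0}-u^{n_0-1}\|_2+\tfrac{b}{a}\Delta_{n_0}<\infty$. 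Hence $\{u^n\}$ is Cauchy and therefore converges; by Theorem~\ref{Thm_LimitPoint_StatPoint} its limit is a stationary point of the corresponding reduced $l_q$ problem.

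The step requiring real care --- and the one I expect to be the main obstacle --- is making the KL inequality available along the \emph{whole} tail of the sequence, i.e., the uniformization/trapping argument: one must fix a radius $\rho>0$ with $B(u^*,\rho)$ contained in the KL neighborhood $U$ of $u^*$, then use that $u^*$ is a limit point, that $T(u^n)\downarrow T^*$, and the asymptotic regularity $\|u^{n+1}-u^n\|_2\to0$ of Lemma~\ref{Lemma_propofnewsequence}(f) to choose $n_0\ge K$ large enough that $T^*<T(u^{n_0})<T^*+\xi$ and
\[
\|u^{n_0}-u^*\|_2+\|u^{n_0}-u^{n_0-1}\|_2+\frac{b}{a}\,\varphi\big(T(u^{n_0})-T^*\big)<\rho,
\]
and finally prove by induction on $n\ge n_0$ that $u^n\in B(u^*,\rho)$. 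The induction is delicate because the bound on $\sum\|u^{k+1}-u^k\|_2$ and the containment $u^k\in B(u^*,\rho)$ are mutually dependent: given $u^{n_0},\dots,u^n\in B(u^*,\rho)$ one may apply the telescoping estimate up to index $n$ (all the KL invocations being at points already known to lie in $B(u^*,\rho)$), obtain $\sum_{k=n_0}^{n}\|u^{k+1}-u^k\|_2\le \|u^{n_0}-u^{n_0-1}\|_2+\tfrac{b}{a}\Delta_{n_0}$, and conclude $\|u^{n+1}-u^*\|_2\le\|u^{n_0}-u^*\|_2+\sum_{k=n_0}^{n}\|u^{k+1}-u^k\|_2<\rho$. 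A minor technical point is the harmless division by $\|u^n-u^{n-1}\|_2$ and by $\|\nabla T(u^n)\|_2$: whenever KL applies at $u^n$ it forces $\|\nabla T(u^n)\|_2>0$, and then the relative error bound forces $\|u^n-u^{n-1}\|_2>0$, so both divisions are legitimate exactly where they are used.
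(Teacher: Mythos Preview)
Your proposal is correct and follows essentially the same Attouch--Bolte--Svaiter KL descent argument as the paper's proof: sufficient decrease plus relative error plus the KL inequality at a limit point give the majorizing inequality $2\|u^{n+1}-u^n\|_2\le\|u^n-u^{n-1}\|_2+\tfrac{b}{a}(\Delta_n-\Delta_{n+1})$, and an induction traps the tail in $B(u^*,\rho)$ so that the telescoping sum yields $\sum_n\|u^{n+1}-u^n\|_2<\infty$. Your write-up is in fact slightly more careful than the paper's on two edge cases (the trivial case $T(u^{n_0})=T^*$ and the legitimacy of dividing by $\|\nabla T(u^n)\|_2$ and $\|u^n-u^{n-1}\|_2$), but the structure and the trapping condition are the same up to cosmetic differences.
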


\begin{proof}
Assume that $u^* \in \mathcal{U}$ is a limit point of $\{u^n\}$.
By Lemma {\ref{Lemma_propofnewsequence}}, we have known the following facts:
\begin{enumerate}
\item[(i)]
$\|u^{n+1} - u^n\|_2 \rightarrow 0$ as $n\rightarrow \infty$;

\item[(ii)]
$\{T(u^n)\}$ is monotonically decreasing and converges to $T(u^*)$;

\item[(iii)]
there exists a subsequence $\{u^{n_j}\}$ converges to $u^*$, that is,
$$
u^{n_j} \rightarrow u^* \ \text{as} \ j\rightarrow \infty.
$$
\end{enumerate}
Therefore, for any positive constant $\varepsilon<\xi$,
there exists a sufficiently large integer $j^*>0$ such that
when $n\geq n_{j^*}$,
\begin{equation}
\|u^{n+1}-u^n\|_2 <\varepsilon \ \text{and}\ 0< T(u^n) - T(u^*) < \varepsilon,
\label{Con1}
\end{equation}
and when $j\geq j^*$,
\begin{equation}
\|u^{n_{j}} - u^*\|_2 < \varepsilon,
\label{Con2}
\end{equation}
and further by the fact that $T$ is a KL function with $\varphi$ as the desingularizing function,
\begin{equation}
\|u^*-u^{n_{j^*}}\|_2 + 2\sqrt{\frac{T(u^{n_{j^*}})-T(u^*)}{a}}+\frac{b}{a} \varphi(T(u^{n_{j^*}})-T(u^*)) < \varepsilon.
\label{InitialAss1}
\end{equation}

We redefine a new sequence $\{\hat{u}^n\}$ for $n\in \mathbf{N}$ with
\[
\hat{u}^n = u^{n+n_{j^*}}.
\]
Then the following inequalities hold naturally for each $n\in \mathbf{N}$,
\[
\|\hat{u}^{n+1} - \hat{u}^n\|_2 < \varepsilon \ \text{and}
\ 0<T(\hat{u}^n) - T(u^*) < \varepsilon,
\]
and
\begin{equation}
\|u^*-\hat{u}^0\|_2 + 2\sqrt{\frac{T(\hat{u}^0)-T(u^*)}{a}}+\frac{b}{a} \varphi(T(\hat{u}^0)-T(u^*)) < \varepsilon.
\label{InitialAss}
\end{equation}
Therefore, the convergence of $\{u^n\}$ is equivalent to the convergence of $\{\hat{u}^n\}$.

The key point to prove the convergence of $\{\hat{u}^n\}$ is to justify the following claim: for $n=1,2,\cdots$
\begin{equation}
\hat{u}^n \in \mathbf{B}(u^*,\varepsilon),
\label{NeiBor_z*}
\end{equation}
that is, $\hat{u}^n$ lies in an $\varepsilon$-neighborhood of $u^*$,
and
\begin{align}
&\sum_{i=1}^n \|\hat{u}^{i+1} - \hat{u}^i\|_2 + \|\hat{u}^{n+1} - \hat{u}^n\|_2 \leq
\|\hat{u}^0 - \hat{u}^1\|+\nonumber\\
& \frac{b}{a} \left(\varphi({T(\hat{z}^1)-T({z}^*)})-\varphi({T(\hat{z}^{n+1})-T({z}^*)})\right).
\label{CauchySeq1}
\end{align}

By Lemma {\ref{Lemma_propofnewsequence}}(e), it can be observed that
\begin{equation}
a\|\hat{u}^{n+1}-\hat{u}^n\|_2^2 \leq T(\hat{u}^n) - T(\hat{u}^{n+1})
\label{SuffDecr}
\end{equation}
for any $n\in \mathbf{N}$.
Fix $n\geq 1$, we claim that if $\hat{u}^n \in B(u^*,\varepsilon)$,
then
\begin{align}
&2\|\hat{u}^{n+1} - \hat{u}^n\|_2 \leq \|\hat{u}^n-\hat{u}^{n-1}\|_2 +\nonumber\\
& \frac{b}{a} \left(\varphi({T(\hat{u}^n) - T(u^*)}) - \varphi({T(\hat{u}^{n+1}) - T(u^*)})\right).
\label{MajorizingSeq}
\end{align}
Since $T$ is a KL function, by Lemma {\ref{Lemma_KL_T}}, it holds
\begin{equation}
\varphi'(T(\hat{u}^n) - T(u^*)) \geq \frac{1}{\|\nabla T(\hat{u}^{n})\|_2}.
\label{KLIneq1}
\end{equation}
Moreover, by Lemma {\ref{Lemma_RelativeErrCond}},
\begin{equation}
\varphi'(T(\hat{z}^n) - T(z^*)) \geq \frac{1}{\|\nabla T(\hat{u}^{n})\|_2} \geq \frac{1}{b\|\hat{u}^n - \hat{u}^{n-1}\|_2}.
\label{KLIneq2}
\end{equation}
Furthermore, by the concavity of the function $\varphi{(s)}$ for $s>0$,
\begin{align}
&\varphi({T(\hat{u}^n) - T(u^*)}) - \varphi({T(\hat{u}^{n+1}) - T(u^*)}) \nonumber\\
&\geq \varphi'(T(\hat{u}^n) - T(u^*))(T(\hat{u}^n)-T(\hat{u}^{n+1})).
\label{ConcavitySqrt}
\end{align}
Plugging the inequalities (\ref{SuffDecr}) and (\ref{KLIneq2}) into (\ref{ConcavitySqrt}) and after some simplifications,
\begin{align*}
&\|\hat{u}^n - \hat{u}^{n+1}\|_2^2 \leq \frac{b}{a} \|\hat{u}^n -\hat{u}^{n-1}\|_2 \nonumber\\
&\times \left( \varphi({T(\hat{u}^n) - T(u^*)}) - \varphi({T(\hat{u}^{n+1}) - T(u^*)}) \right).
\end{align*}
Using the inequality $\sqrt{\alpha \beta} \leq \frac{\alpha+\beta}{2}$ for any $\alpha, \beta \geq 0$,
we conclude that inequality (\ref{MajorizingSeq}) is satisfied.
Thus, the claim (\ref{CauchySeq1}) can be easily derived from (\ref{MajorizingSeq}).

In the following, we will prove the claim (\ref{NeiBor_z*}) by induction.

First, by (\ref{InitialAss}), it implies
$$
\hat{u}^0 \in B(u^*,\varepsilon).
$$

Second, it can be observed that
\begin{align*}
\|\hat{u}^1 - u^*\|_2
&\leq \|\hat{u}^1 - \hat{u}^0\|_2 + \|\hat{u}^0 - u^*\|_2 \nonumber\\
&\leq \sqrt{\frac{T(\hat{u}^0) - T(\hat{u}^1)}{a}} + \|\hat{u}^0 - u^*\|_2 \nonumber\\
&\leq \sqrt{\frac{T(\hat{u}^0) - T({u}^*)}{a}} + \|\hat{u}^0 - u^*\|_2 \nonumber\\
&<\varepsilon,
\end{align*}
where the second inequality holds for (\ref{SuffDecr}), the third inequality holds due to $T(u^*) \leq T(\hat{u}^1) \leq T(\hat{u}^0)$
and the last inequality holds for (\ref{InitialAss}).
Therefore, $\hat{u}^1 \in B(u^*,\varepsilon)$.

Third, suppose that $\hat{u}^n \in B(u^*,\varepsilon)$ for $n\geq 1$, then
%Suppose that (\ref{NeiBor_z*}) holds for $n\geq 1$, then
\begin{align}
&\|\hat{u}^{n+1} - u^*\|_2 \leq \|u^* - \hat{u}^0\|_2 + \|\hat{u}^0 - \hat{u}^1\|_2 + \sum_{i=1}^n \|\hat{u}^{i+1} - \hat{u}^i\|_2 \nonumber\\
&\leq \|u^* - \hat{u}^0\|_2  + 2 \|\hat{u}^0 - \hat{u}^1\|_2 \nonumber\\
&+ \frac{b}{a} \left( \varphi({T(\hat{u}^1)-T(u^*)}) - \varphi({T(\hat{u}^{n+1}) - T(u^*)}) \right),
\label{ThdStep}
\end{align}
where the second inequality holds for (\ref{CauchySeq1}).
Moreover,
\begin{equation}
\|\hat{u}^0 - \hat{u}^1\|_2 \leq \sqrt{\frac{T(\hat{u}^0) - T(\hat{u}^1)}{a}} \leq \sqrt{\frac{T(\hat{u}^0) - T({u}^*)}{a}},
\label{diffz0z1}
\end{equation}
where the first inequality holds for (\ref{SuffDecr}) and the second inequality holds for $T(u^*) \leq T(\hat{u}^1) \leq T(\hat{u}^0)$.
Also, it is obvious that
\begin{equation}
\varphi({T(\hat{z}^1)-T(z^*)}) - \varphi({T(\hat{z}^{n+1}) - T(z^*)}) \leq \varphi({T(\hat{z}^0)-T(z^*)}).
\label{diffz1z*}
\end{equation}
Plugging (\ref{diffz0z1}) and (\ref{diffz1z*}) into (\ref{ThdStep}) and using (\ref{InitialAss}),
we can claim that
$$
\hat{u}^{n+1} \in B(u^*,\varepsilon).
$$

By (\ref{CauchySeq1}), it shows that
\[
\sum_{i=1}^n \|\hat{u}^{i+1} - \hat{u}^i\|_2 \leq \|\hat{u}^0 - \hat{u}^1\|_2
+ \frac{b}{a}\varphi({T(\hat{u}^1)-T({u}^*)}).
\]
Therefore,
\[
\sum_{i=1}^{\infty} \|\hat{u}^{n+1} - \hat{u}^n\|_2 < +\infty,
\]
which implies that the sequence $\{\hat{u}^n\}$ converges to some $u^{**}$.
While we have assumed that there exists a subsequence $\{\hat{u}^{n_j}\}$ converges to $u^*$,
then it must hold
\[
u^{**} = u^*.
\]
Consequently, we can claim that $\{u^n\}$ converges to $u^*$.

%With these, we end the proof of Theorem 4.
\end{proof}

From Theorem {\ref{Thm_Conv_un}}, the sequence $\{u^n\}$ is convergent.
As a consequence, we can also claim the convergence of $\{x^n\}$ as shown in the following theorem.

\begin{theorem}\label{Thm_MainConv}
Let $\{x^n\}$ be a sequence generated by the CCD algorithm.
Assume that $0<\mu<L_{\max}^{-1}$ and $T_{\lambda}(x^0) < +\infty$, then $\{x^n\}$ converges to a stationary point.
\end{theorem}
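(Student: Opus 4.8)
The plan is to push the convergence of $\{u^n\}$, which Theorem \ref{Thm_Conv_un} already provides, back through the modifications (a)--(c) to the original sequence $\{x^n\}$, and then to invoke Theorem \ref{Thm_LimitPoint_StatPoint} to identify the limit as a stationary point. Thus the whole argument will be a matter of transporting a single limit through the chain $\{x^n\} \to \{\hat x^n\} \to \{\bar x^n\} \to \{u^n\}$ in the reverse direction.

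First I would observe that, by Theorem \ref{Thm_SupportConv}, after dropping the first $n_0$ iterates the support and the sign pattern of the iterates are permanently frozen at $I$ and $sgn(x^*)$; consequently every $\bar x^n$ vanishes on $I^c$, so $\bar x^n = P_I^T u^n$ with $u^n = P_I \bar x^n$. Since $P_I^T$ is a linear isometric embedding of $\mathbf{R}^K$ into $\mathbf{R}^N$, the convergence $u^n \to u^*$ established in Theorem \ref{Thm_Conv_un} immediately yields $\bar x^n \to P_I^T u^*$.

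Next I would transfer this to $\{\hat x^n\}$ and hence to $\{x^n\}$. The sequence $\{\bar x^n\}$ is obtained from $\{\hat x^n\}$ by collapsing each maximal block of equal consecutive iterates to a single term, so there is a nondecreasing cofinal index map $\sigma$ with $\hat x^n = \bar x^{\sigma(n)}$ and $\sigma(n) \to \infty$. Given $\varepsilon > 0$, choosing $M$ with $\|\bar x^m - P_I^T u^*\|_2 < \varepsilon$ for all $m \ge M$ forces $\|\hat x^n - P_I^T u^*\|_2 < \varepsilon$ whenever $\sigma(n) \ge M$, i.e. for all large $n$; hence $\hat x^n \to P_I^T u^*$. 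Because $\hat x^n = x^{n_0+n}$ is merely a shift of $\{x^n\}$, the sequence $\{x^n\}$ converges, say to $x^*$. A convergent sequence has a singleton set of limit points, so $\mathcal{L} = \{x^*\}$, and Theorem \ref{Thm_LimitPoint_StatPoint} gives $x^* \in \mathcal{L} \subseteq {\mathcal F}_q$; by the definition following Lemma \ref{Lemma_OptimalCondition}, $x^*$ is a stationary point of the $l_q$ regularization problem, which is the assertion.

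The hard part will not be a new estimate — the analytic core, the KL-based Cauchy-sequence argument, is already carried out in Theorem \ref{Thm_Conv_un}. The only point needing care is the bookkeeping of the reductions: one must confirm that the block-collapsing map $\sigma$ is monotone and cofinal and that the support of each tail iterate $\bar x^n$ is exactly $I$, so that $P_I^T$ reconstructs $\bar x^n$ losslessly from $u^n$ and none of the three modifications alters the convergence behaviour. Both facts are direct consequences of Theorem \ref{Thm_SupportConv} together with the explicit construction of $\{u^n\}$, so the proof reduces to a careful packaging argument.
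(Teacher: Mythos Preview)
Your proposal is correct and follows essentially the same route as the paper: the paper's proof of Theorem~\ref{Thm_MainConv} is a single sentence invoking Theorems~\ref{Thm_LimitPoint_StatPoint} and~\ref{Thm_Conv_un} together with the remark that the modifications (a)--(c) preserve convergence behaviour, and your proposal simply unpacks that remark by tracing the limit back through $P_I^T$, the block-collapsing map $\sigma$, and the index shift. The only difference is the level of detail; nothing substantive is added or missing.
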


\begin{proof}
The proof of Theorem {\ref{Thm_MainConv}} can be directly derived by Theorems {\ref{Thm_LimitPoint_StatPoint}}, {\ref{Thm_Conv_un}}
and the fact that the convergence behaviours $\{x^n\}$ and $\{u^n\}$ are the same.
\end{proof}

%By Lemma 2, it can be noted that under a more common and slight stricter condition, i.e., $0<\mu<\|A\|_2^{-2},$
%any global minimizer of the $l_q$ regularization problem is a stationary point.
%
%
%From Theorem 2, we note that the condition on $\mu$ becomes $(0,\|A\|_2^{-2})$,
%which is slightly stricter than $(0,L_{\max}^{-1})$.
%This stricter condition is required to guarantee that any global minimizer of the $l_q$ regularization problem belongs to the fixed point set ${\mathcal{F}}_q$ by Lemma 2.

\subsection{Convergence to A Local Minimizer}

In this subsection, we mainly answer the second open question proposed in the end of the subsection III.A.
More specifically, we will demonstrate that the CCD algorithm converges to a local minimizer of the $l_q$ regularization problem under certain conditions.

\begin{theorem}\label{Thm_LocalMin}
Let $\{x^n\}$ be a sequence generated by the CCD algorithm.
Assume that $0<\mu<L_{\max}^{-1}$, $T_{\lambda}(x^0) < +\infty$, and $x^*$ is a convergent point of $\{x^n\}$.
Let $I = Supp(x^*)$, and $K=\|x^*\|_0$. Then $x^*$ is a local minimizer of $T_{\lambda}$ if the following conditions hold:
\begin{enumerate}
\item[(a)]
$\sigma_{\min}(A_I^TA_I)>0$;
\item[(b)]
$0<\lambda < \frac{\sigma_{\min}(A_I^TA_I)|e|^{2-q}}{q(1-q)}$,
where $e = \min_{i\in I} |x_i^*|$.
\end{enumerate}
\end{theorem}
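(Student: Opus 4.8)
The plan is to produce a radius $\delta>0$ such that $T_\lambda(x)\ge T_\lambda(x^*)$ for every $x$ with $\|x-x^*\|_2<\delta$. Writing $x=x^*+h$ and expanding the quadratic term,
\[
T_\lambda(x^*+h)-T_\lambda(x^*)=\langle A^T(Ax^*-y),h\rangle+\frac{1}{2}\|Ah\|_2^2+\lambda\big(\|x^*+h\|_q^q-\|x^*\|_q^q\big).
\]
Since $x^*$ is a convergent point of $\{x^n\}$, Theorems~\ref{Thm_LimitPoint_StatPoint} and~\ref{Thm_MainConv} give $x^*\in{\mathcal F}_q$, so Lemma~\ref{Lemma_OptimalCondition} applies: $x_i^*=0$ for $i\in I^c$; $A_i^T(Ax^*-y)=-\lambda q\,sgn(x_i^*)|x_i^*|^{q-1}$ for $i\in I$; and $|A_i^T(Ax^*-y)|\le\tau_{\mu,q}/\mu$ for $i\in I^c$. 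I would then split both the linear term and the penalty difference into their $I$-part and $I^c$-part and estimate each.

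On the support, for $i\in I$ and $|h_i|<e\le|x_i^*|$ the map $t\mapsto|t|^q$ is $C^2$ on the segment from $x_i^*$ to $x_i^*+h_i$, where its second derivative equals $-q(1-q)|t|^{q-2}$ with $|t|\ge e-\delta$, hence of magnitude at most $q(1-q)(e-\delta)^{q-2}$. A Taylor expansion with Lagrange remainder gives $|x_i^*+h_i|^q-|x_i^*|^q\ge q\,sgn(x_i^*)|x_i^*|^{q-1}h_i-\frac{1}{2}q(1-q)(e-\delta)^{q-2}h_i^2$; summing over $i\in I$ and using the first-order identity above, the linear terms cancel and one is left with
\[
\sum_{i\in I}\Big(A_i^T(Ax^*-y)h_i+\lambda\big(|x_i^*+h_i|^q-|x_i^*|^q\big)\Big)\ge-\frac{1}{2}\lambda q(1-q)(e-\delta)^{q-2}\|h_I\|_2^2 .
\]
Off the support, since $0<q<1$, for $|h_i|\le\delta$ small one has $\lambda|h_i|^q\ge 2(\tau_{\mu,q}/\mu)|h_i|$ and $|h_i|^q\ge\delta^{q-2}h_i^2$, so $A_i^T(Ax^*-y)h_i+\lambda|h_i|^q\ge\frac{\lambda}{2}\delta^{q-2}h_i^2$; summing, $\langle A_{I^c}^T(Ax^*-y),h_{I^c}\rangle+\lambda\|h_{I^c}\|_q^q\ge\frac{\lambda}{2}\delta^{q-2}\|h_{I^c}\|_2^2$.

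It remains to lower-bound $\frac{1}{2}\|Ah\|_2^2$. Writing $h=h_I+h_{I^c}$ (zero-padded), using $\|A_Ih_I\|_2^2\ge\sigma_{\min}(A_I^TA_I)\|h_I\|_2^2$ from condition (a), and Young's inequality on the cross term $\langle A_Ih_I,A_{I^c}h_{I^c}\rangle$, for any $\epsilon\in(0,1)$ one gets $\frac{1}{2}\|Ah\|_2^2\ge\frac{1}{2}(1-\epsilon)\sigma_{\min}(A_I^TA_I)\|h_I\|_2^2-\frac{1}{2}(\tfrac{1}{\epsilon}-1)\|A_{I^c}\|_2^2\|h_{I^c}\|_2^2$. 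Adding the three bounds,
\[
T_\lambda(x^*+h)-T_\lambda(x^*)\ge\frac{1}{2}\big[(1-\epsilon)\sigma_{\min}(A_I^TA_I)-\lambda q(1-q)(e-\delta)^{q-2}\big]\|h_I\|_2^2+\frac{1}{2}\big[\lambda\delta^{q-2}-(\tfrac{1}{\epsilon}-1)\|A_{I^c}\|_2^2\big]\|h_{I^c}\|_2^2 .
\]
Condition (b) is exactly $\lambda q(1-q)e^{q-2}<\sigma_{\min}(A_I^TA_I)$, so I would first fix $\epsilon>0$ small, then shrink $\delta>0$ (note $(e-\delta)^{q-2}\to e^{q-2}$ and $\delta^{q-2}\to+\infty$ as $\delta\to0$, since $q-2<0$) so that both bracketed coefficients become nonnegative while also $\delta<e$ and $\delta$ meets the smallness thresholds used above. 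Then the right-hand side is $\ge0$, so $x^*$ is a local (indeed strict local) minimizer.

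The step I expect to be the main obstacle is the treatment of the off-support directions $h_{I^c}$: the data term is strongly convex only along $I$, so perturbing off the support can erode $\frac{1}{2}\|Ah\|_2^2$ through the cross term $\langle A_Ih_I,A_{I^c}h_{I^c}\rangle$. The resolution is that, because $q<1$, the nonsmooth penalty on $I^c$ behaves like $\|h_{I^c}\|_q^q\gg\|h_{I^c}\|_2^2$ near the origin, giving a quadratic cushion with the arbitrarily large coefficient $\lambda\delta^{q-2}$ that absorbs both the linear off-support term (controlled via stationarity condition (c)) and the negative cross term once $\delta$ is small; balancing this against the genuine second-order competition on $I$ — the concavity deficit $\lambda q(1-q)e^{q-2}$ of $\|\cdot\|_q^q$ versus the curvature $\sigma_{\min}(A_I^TA_I)$ — is precisely what condition (b) is designed to make work.
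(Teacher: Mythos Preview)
Your proof is correct and follows essentially the same strategy as the paper's: expand $T_\lambda(x^*+h)-T_\lambda(x^*)$, use the stationarity conditions from Lemma~\ref{Lemma_OptimalCondition} to cancel the first-order terms on $I$ and control the linear terms on $I^c$, invoke condition~(b) to make the on-support second-order form nonnegative, and exploit the fact that $|h_i|^q$ dominates any linear or quadratic term near $0$ to absorb the off-support and cross contributions. The only difference is bookkeeping---the paper groups the expansion into three parts $E_1,E_2,E_3$ and controls the cross term via $\ell_1$-norm bounds paired with a linear lower bound $\phi(v)\ge C|v|$, whereas you keep $\tfrac12\|Ah\|_2^2$ intact, split it with Young's inequality, and pair the resulting negative $I^c$-piece against the quadratic lower bound $|h_i|^q\ge\delta^{q-2}h_i^2$; both devices achieve the same end.
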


Intuitively, under the condition (b) in Theorem {\ref{Thm_LocalMin}},
it follows that the principle submatrix of the Henssian matrix of $T_{\lambda}$ at $x^*$ restricted to the index set $I$ is positively definite.
Thus, the convexity of the objective function can be guaranteed in a neighborhood of $x^*$.
As a consequence, $x^*$ should be a local minimizer.

\begin{proof}
For simplicity, let
$$
F(x) = \frac{1}{2} \|Ax-y\|_2^2,
\Phi(x) = \sum_{i=1}^N \phi(x_i)
$$
with
$$\phi(x_i) = |x_i|^q.$$
Thus, $T_{\lambda}(x) = F(x) + \lambda \Phi(x)$, and for $v\neq 0,$
\begin{equation}
\phi'(v) = q sgn(v)|v|^{q-1},  \phi''(v) = q(q-1)|v|^{q-2}.
\label{Derivative-phi}
\end{equation}

Let
\begin{equation}
\epsilon = \frac{1}{2} \left(\sigma_{\min}(A_I^TA_I)+\lambda \phi''(e)\right).
\label{epsilon}
\end{equation}
By the assumption of Theorem {\ref{Thm_LocalMin}}, it holds $\epsilon>0$.
Furthermore, we define two constants
\begin{align}
C_0= \max{\{\|A_I^TA_I\|_1, 2\|A_I^TA_{I^c}\|_1\}},
\label{C_epsilon}
\end{align}
and
\begin{align*}
C = \frac{\tau_{\mu}}{\lambda\mu} + \frac{\sqrt{N}e}{\lambda}C_0,
\end{align*}
where $e=\min_{i\in I} |x_i^*|$.
In the following, we will show that there exists a constant $0<c<1$, it holds
$$
T_{\lambda}(x^*+h) - T_{\lambda}(x^*) \geq 0,
$$
whenever $\|h\|_2<ce$.

Actually, we have
\begin{align}
&T_{\lambda}(x^*+h) - T_{\lambda}(x^*) = F(x^*+h) - F(x^*) \nonumber\\
& + \lambda \left(\sum_{i\in I} (\phi(x_i^*+h_i) - \phi(x_i^*))+ \sum_{i\in I^c}\phi(h_i)\right).
\label{ErrEqn}
\end{align}
By Taylor expansion, it holds
\begin{align}
&F(x^*+h) - F(x^*)\nonumber\\
&= \langle h_I, A_I^T(Ax^* -y) \rangle + \langle h_{I^c}, A_{I^c}^T(Ax^* -y) \rangle \nonumber\\
&+ \frac{1}{2} \left( h_I^T A_I^TA_I h_I +  h_{I^c}^T A_{I^c}^TA_{I^c} h_{I^c} \right)
+  h_{I^c}^T A_{I^c}^TA_{I}h_I,
\label{TaylorExpF}
\end{align}
and
\begin{align}
&\sum_{i\in I} (\phi(x_i^*+h_i) - \phi(x_i^*))= \nonumber\\
& \sum_{i\in I} (\phi'(x_i^*)h_i  + \frac{1}{2} \phi''(x_i^*)|h_i|^2 + o(|h_i|^2)).
\label{TaylorExpPhi}
\end{align}
%for some $\xi_i \in (0,1)$, $i\in I.$
Denote $\Lambda_1\in \mathbf{R}^{K\times K}$ as a diagonal matrix generated by $\{\phi''(x_i^*)\}_{i\in I}$,
that is, for $i=1,2,\cdots,K$,
\begin{equation}
\Lambda_1(i,i) = \phi''(x_{I(i)}^*) = q(q-1) |x_{I(i)}^*|^{q-2},
\end{equation}
where $I(i)$ represents the $i$-th element of $I.$
By Lemma {\ref{Lemma_OptimalCondition}}(b), it implies
\begin{equation}
A_i^T(Ax^*-y) + \lambda \phi'(x_i^*) = 0.
\label{OptCond}
\end{equation}
for $i\in I$.
Plugging (\ref{TaylorExpF}), (\ref{TaylorExpPhi}) and (\ref{OptCond}) into (\ref{ErrEqn}), it becomes
\begin{align}
&T_{\lambda}(x^*+h) - T_{\lambda}(x^*) \nonumber\\
&= \frac{1}{2}h_I^T \left( A_I^TA_I+ \lambda \Lambda_1\right) h_I + o(\|h_I\|_2^2) \nonumber\\
& + \frac{1}{2} h_{I^c}^T \left(A_{I^c}^TA_{I^c} h_{I^c} + 2A_{I^c}^TA_I h_I\right) \nonumber\\
&+ h_{I^c}^T A_{I^c}^T(Ax^*-y)+ \lambda \sum_{i\in I^c} \phi(h_i).
\label{ErrEqn1}
\end{align}
Moreover, by the definition of $o(\|h_I\|_2^2)$,
there exists a constant $0<c_{\epsilon}<1$ (depending on $\epsilon$) such that
$|o(\|h_I\|_2^2)|\leq \epsilon \|h_I\|_2^2$ whenever $\|h_I\|_2 < c_{\epsilon} e$.
Therefore
\begin{align}
&T_{\lambda}(x^*+h) - T_{\lambda}(x^*) \nonumber\\
&= \frac{1}{2}h_I^T \left( A_I^TA_I+ \lambda \Lambda_1\right) h_I - \epsilon \|h_I\|_2^2 \nonumber\\
& + \frac{1}{2} h_{I^c}^T \left(A_{I^c}^TA_{I^c} h_{I^c} + 2A_{I^c}^TA_I h_I\right) \nonumber\\
&+ h_{I^c}^T A_{I^c}^T(Ax^*-y)+ \lambda \sum_{i\in I^c} \phi(h_i).
\label{ErrEqn2}
\end{align}
Furthermore, we divide the right side of the inequality (\ref{ErrEqn2}) into three parts, that is, $E_1$, $E_2$ and $E_3$ with
\begin{align}
E_1 = \frac{1}{2}h_I^T \left( A_I^TA_I+ \lambda \Lambda_1\right) h_I - \epsilon \|h_I\|_2^2,
\label{E1}
\end{align}
\begin{align}
E_2 = \frac{1}{2} h_{I^c}^T \left(A_{I^c}^TA_{I^c} h_{I^c} + 2A_{I^c}^TA_I h_I\right),
%\left(\nabla_{{I^c}{I^c}}^2 F(x^*) h_{I^c} + \left(\nabla_{{I^c}{I}}^2 F(x^*)+ \left(\nabla_{{I}{I^c}}^2 F(x^*)\right)^T\right)h_I\right) - \epsilon \|h_{I^c}\|_2^2,
\label{E2}
\end{align}
\begin{align}
E_3 = h_{I^c}^T A_{I^c}^T(Ax^*-y)+ \lambda \sum_{i\in I^c} \phi(h_i).
\label{E3}
\end{align}

By the definition of $\epsilon$ as in (\ref{epsilon}), it can be observed that
\begin{align*}
\sigma_{\min}(A_I^TA_I+\lambda \Lambda_1) \geq \sigma_{\min}(A_I^TA_I) + \lambda \phi''(e),
\end{align*}
and thus,
\begin{align}
E_1 &\geq \left(\frac{1}{2} \sigma_{\min}
(A_I^TA_I+\lambda \Lambda_1)-\epsilon\right)\|h_I\|_2^2 \nonumber\\
&\geq 0.
\label{E11}
\end{align}
Since $0<q<1$, then for any $v\in \mathbf{R}_{+}$,
it holds $\phi'(v)=qsgn(v)|v|^{q-1} \rightarrow \infty$ as $v\rightarrow 0^+$,
%which implies that for any sufficiently large number $C>0$,
which implies that there exists a sufficiently small constant $0<c<\frac{c_{\epsilon}}{\sqrt{N}}$
such that $\phi'(v) > C$ for any $0<v<ce$.
By Taylor expansion and $\phi(0) = 0$,
we have
\begin{equation*}
\phi(v) \geq C v>0,
\label{I12}
\end{equation*}
when $0<v<ce$.
Similarly, it holds
\begin{equation*}
\phi(v) \geq -C v>0,
\label{I12}
\end{equation*}
when $-ce<v<0$.
Thus, when $0<|v|<ce$,
\begin{equation}
\phi(v) \geq C|v|.
\label{phi_lowBound}
\end{equation}
Moreover,
by Lemma {\ref{Lemma_OptimalCondition}}(c), for any $i\in I^c$,
$$
\phi(h_i) + \frac{1}{\lambda}A_i^T(Ax^*-y)h_i
\geq \phi(h_i) - \frac{\tau_{\mu,q}}{\lambda\mu}|h_i|
\geq (C- \frac{\tau_{\mu,q}}{\lambda\mu})|h_i|.
$$
Thus,
\begin{align*}
E_3\geq \lambda (C - \frac{\tau_{\mu}}{\lambda\mu})\|h_{I^c}\|_1
= {\sqrt{N}e}C_0 \|h_{I^c}\|_1.
\end{align*}
Moreover, since $\|h\|_2<c_{\epsilon}e$ with $0<c_{\epsilon}<1$, then
$\|h\|_1< \sqrt{N}e$, $\|h_I\|_1< \sqrt{N}e$ and $\|h_{I^c}\|_1<\sqrt{N}e$.
It is easy to check that
\begin{align*}
|E_2|
&\leq \frac{1}{2} \|h_{I^c}\|_1 \|A_{I^c}^TA_{I^c}\|_1 \|h_{I^c}\|_1
+ \|h_{I^c}\|_1 \|A_{I^c}^TA_I\|_1 \|h_I\|_1 \nonumber\\
&\leq \|h_{I^c}\|_1 C_0 \|h\|_1
\leq \sqrt{N}e C_0 \|h_{I^c}\|_1 \leq E_3,
\end{align*}
where the second inequality holds for $\|h_{I^c}\|_1 \leq  \|h\|_1$,  $\|h_{I}\|_1 \leq  \|h\|_1$ and the definition of $C_0$ as specified in (\ref{C_epsilon}).
%\begin{align*}
%{\sqrt{N}e}C_{\epsilon} \|h_{I^c}\|_1 \geq \|h_{I^c}\|_1 C_{\epsilon}\|h\|_1  \geq |E_2|.
%\end{align*}
It implies that
\begin{equation}
E_2 + E_3 \geq 0.
\label{E2E3}
\end{equation}
By (\ref{E11}) and (\ref{E2E3}), it holds
\begin{equation*}
T_{\lambda}(x^*+h) - T_{\lambda}(x^*) \geq E_1 + E_2 + E_3 \geq 0,
\label{TotalDiff}
\end{equation*}
for any sufficiently small $h$.
Therefore, $x^*$ is a local minimizer of $T_{\lambda}$.

\end{proof}

\section{Numerical Experiments}

In this section, we demonstrate the effects of the algorithmic parameters on the performance of the proposed CCD algorithm.
Particularly, we will mainly focus on the effect of the stepsize parameter,
while the effects of the regularization parameter $\lambda$ and $q$ can be referred to {\cite{lqCD-Marjanovic-2014}}.

For this purpose, we consider the performance of the CCD algorithm for the sparse signal recovery problem (\ref{ObsMode2}).
In these experiments, we set $m=200,$ $N=400$ and $k^* =20,$
where $m$ is the number of measurements,
$N$ is the dimension of signal and $k^*$ is the sparsity level of the original sparse signal.
The original sparse signal $x^*$ is generated randomly according to the standard Gaussian distribution.
$A$ is of dimension $m\times N=200\times400$ with Gaussian $\mathcal{N}(0,1/200)$ i.i.d. entries and is preprocessed via column-normalization, i.e., $\|A_i\|_2 =1$ for any $i$.
The observation $y$ is generated via $y=Ax^*+\epsilon$ with 30 dB noise.
With these settings, the convergence condition for the CCD algorithm becomes $0<\mu<1.$
To justify the effect of the stepsize parameter, we vary $\mu$ from $0$ to $1$,
as well as consider different $q,$ that is, $q=0.1, 0.3, 0.5, 0.7, 0.9.$
The terminal rule of the CCD algorithm is set as either the recovery mean square error (RMSE) $\frac{\|x^{(n)}-x^*\|_2}{\|x^*\|_2}$  less than a given precision $tol$ (in this case, $tol = 10^{-2}$) or the number of iterations more than a given positive integer $MaxIter$ (in this case, $MaxIter = 1.6 \times 10^{5}$).
The regularization parameter $\lambda$ is set as 0.009 and fixed for all experiments.
The experiment results are shown in Fig. {\ref{Effect_mu}}.

From Fig. 1, we can observe that
the stepsize parameter $\mu$ has almost no influence on the recovery quality of the CCD algorithm (as shown in Fig. {\ref{Effect_mu}}(a))
while it significantly affects the time efficiency of the proposed algorithm (as shown in Fig. {\ref{Effect_mu}}(b)).
Basically, we can claim that larger stepsize implies faster convergence.
This coincides with the common sense.
Therefore, in practice, we suggest a larger step-size like $0.95/L_{\max}$ for the CCD algorithm.
However, as shown in Fig. 1, there are some abnormal points when $q=0.1$ and $0.3$ with smaller $\mu.$
More specifically, when $q=0.1$ with $\mu = 0.1, 0.2, 0.3, 0.4$ as well as $q=0.3$ with $\mu = 0.1, 0.2$,
the recovery error and computational time of these cases are much larger than the other cases.
This phenomena is mainly due to that in these cases, the CCD algorithm stops
when the number of iterations achieves to the given maximal number of iterations but not the recovery error reaches to the given recovery precision.
While in the other cases, the CCD algorithm stops when the recovery error attains to the given recovery precision.
Therefore, in these special cases, more iterations are implemented and thus,
more computational time is required
as well as worse recovery quality is obtained,
as compared with those of the other cases.

\begin{figure}[!t]
\begin{minipage}[b]{.49\linewidth}
\centering
\includegraphics*[scale=0.32]{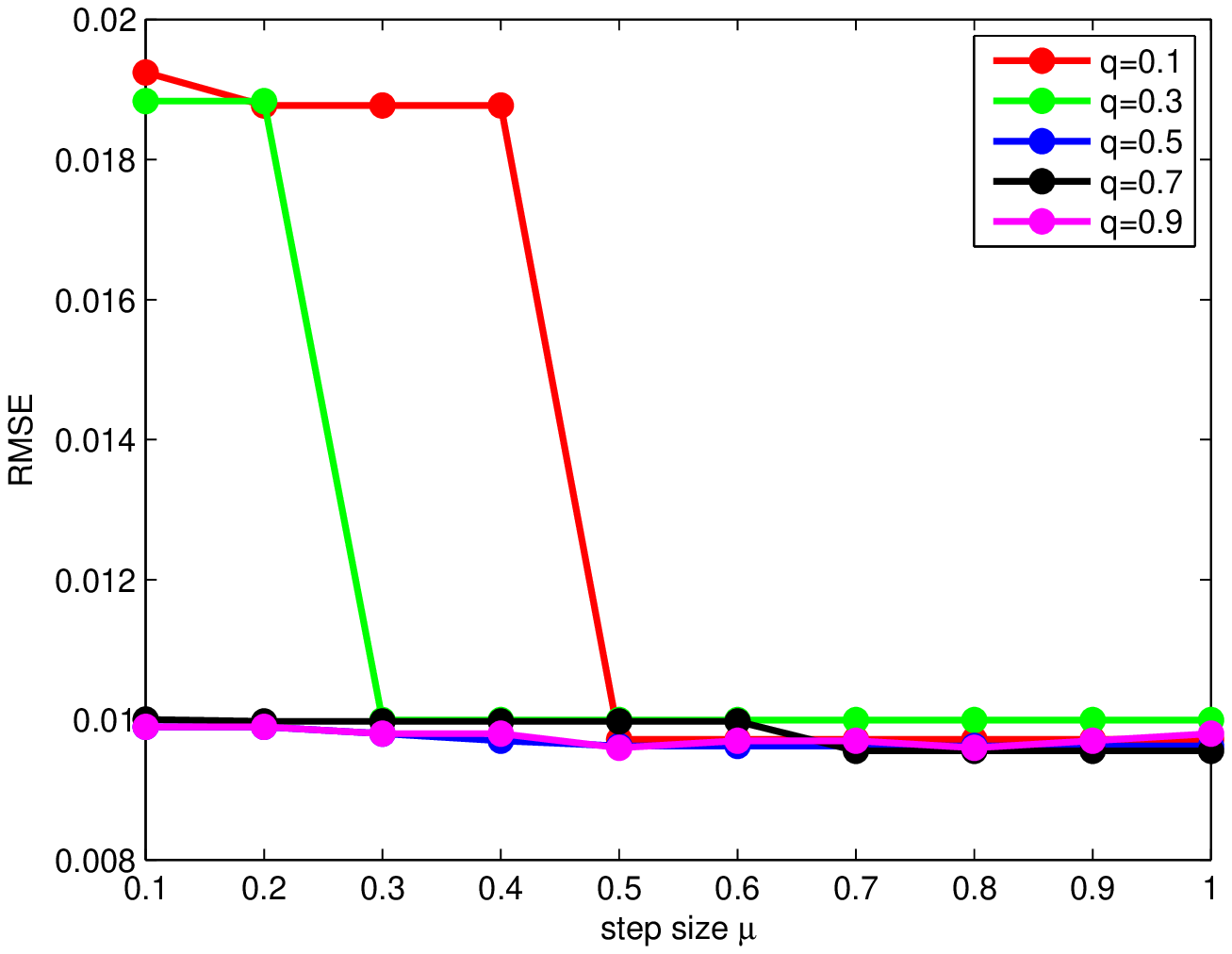}
%  \vspace{-.5cm}
\centerline{{\small (a) Recovery Error}}
\end{minipage}
\hfill
\begin{minipage}[b]{.49\linewidth}
\centering
\includegraphics*[scale=0.32]{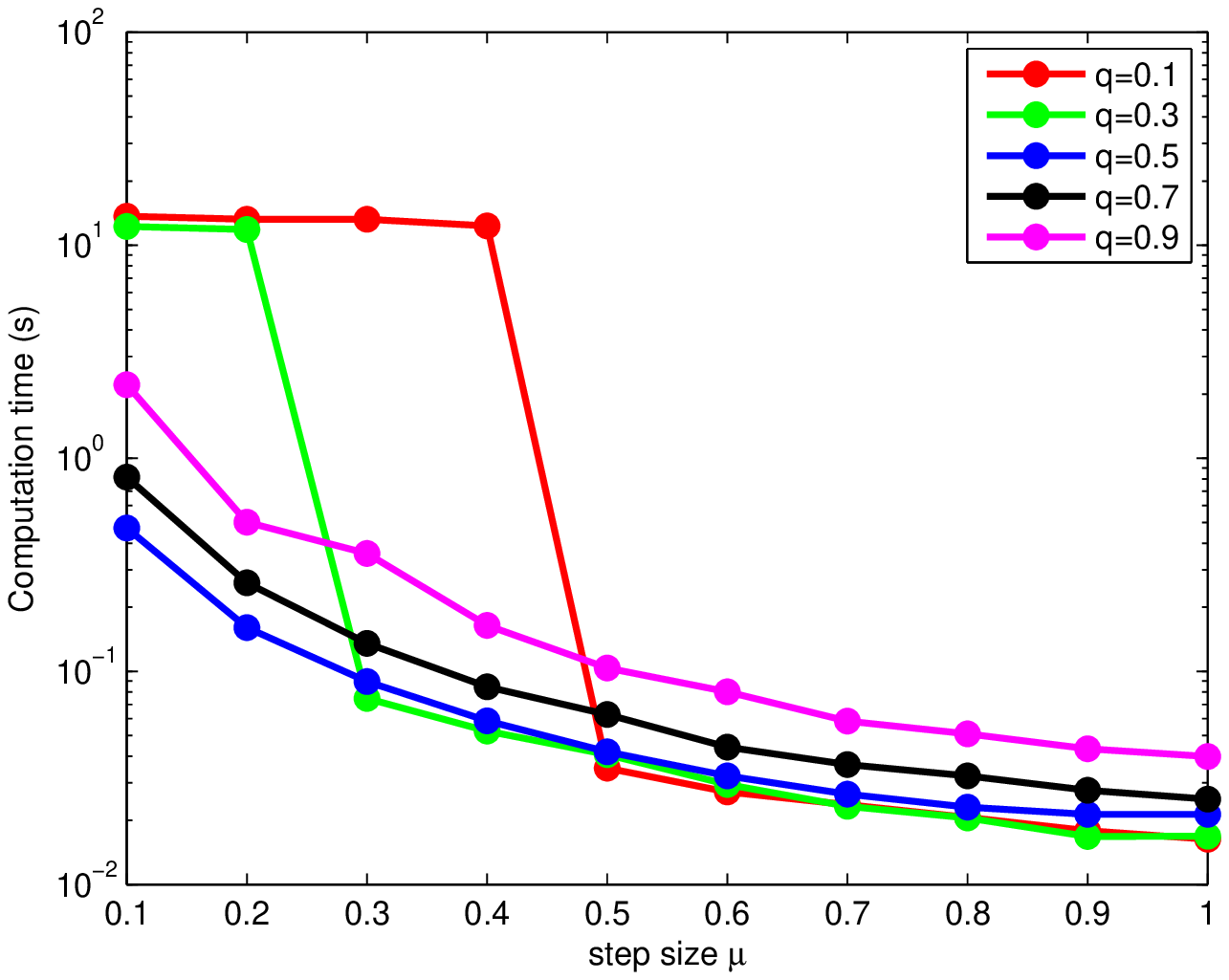}
%  \vspace{-.5cm}
\centerline{{\small (b) Computational Time}}
\end{minipage}
\hfill
\caption{ Experiment for the justification of the effect of stepsize parameter $\mu$ on the performance of the CCD algorithm with different $q$.
(a) The trends of recovery error of the CCD algorithm with different $q$.
(b) The trends of the computational time of the CCD algorithm with different $q$.
}
\label{Effect_mu}
\end{figure}

\section{Conclusion}

We propose a cyclic coordinate descent (CCD) algorithm for the non-convex $l_q$ ($0<q<1$) regularization problem.
The main contribution of this paper is the establishment of the convergence analysis of the proposed CCD algorihm.
In summary, we have verified that
\begin{enumerate}
\item[(i)]
the proposed CCD algorithm converges to a stationary point as long as $0<\mu<\frac{1}{L_{\max}}$ with $L_{\max} = \max_i \|A_i\|_2^2$,
which is weaker than the convergence condition for the iterative jumping thresholding (IJT) algorithm applied to $l_q$ regularization with $0<\mu<\|A\|_2^{-2}$ {\cite{ZengIJT2014}}.
This coincides with the common sense because the CCD algorithm proposed in this paper can be viewed as a Gauss-Seidel type algorithm while IJT algorithm can be seen as a Jocobi type algorithm.

\item[(ii)]
the CCD algorithm further converges to a local minimizer of $l_q$ regularization if the regularization parameter $\lambda$ is relatively small.
\end{enumerate}

Compared with the tightly related work in {\cite{lqCD-Marjanovic-2014}},
there are two significant improvements.
On one hand, we get rid of the column-normalization requirement of the measurement matrix $A$ via introducing a stepsize parameter that improves the flexibility and applicability of the CCD algorithm.
% via adopting such a stepsize parameter.
In addition, the proposed CCD algorithm has almost the same performance of the $l_q$CD algorithm as demonstrated by the numerical experiments.
On the other hand and also the more important one,
we can justify the convergence of the proposed CCD algorithm by introducing the stepsize parameter.
While only the subsequential convergence of $l_q$CD algorithm can be claimed in {\cite{lqCD-Marjanovic-2014}}.

\end{document}